\newlength{\dhatheight}
\newcommand{\doublehat}[1]{%
    \settoheight{\dhatheight}{\ensuremath{\hat{#1}}}%
    \addtolength{\dhatheight}{-0.35ex}%
    \hat{\vphantom{\rule{1pt}{\dhatheight}}%
    \smash{\hat{#1}}}}
\newtheorem{theorem}{Theorem}
\renewcommand\@biblabel[1]{-}
\theoremstyle{remark}
\newtheorem{remark}{Remark}
\theoremstyle{plain}
\newtheorem{corollary}{Corollary}
\newtheorem{lemma}{Lemma}
\newtheorem{assumption}{Assumption}
\newtheorem{definition}{Definition}
\newtheorem{proposition}{Proposition}
\definecolor{jpred}{rgb}{0,0.7,0}
\def\argmax{\mathop{\rm argmax}}
\newcommand\ltwomu{L^2_{\mu}}
\newcommand\ltwom{L^2_{M}}
\newcommand\ltwoe{L^2_{e}}
\newcommand\ltwomn{L^2_{M,n}}
\newcommand\ltwoen{L^2_{e,n}}
\newcommand\ltwoeNn{L^2_{e,N-n}}
\newcommand\ltwoeNnp{L^2_{e,N-n-1}}
\newcommand\ltwoenp{L^2_{e,n+1}}
\newcommand\ltwoxin{L^2_{\xi,n}}
\newcommand\ltwochin{L^2_{\chi, n}}
\newcommand\ltwochinp{L^2_{\chi, n+1}}
\newcommand\ltwofn{L^2_{f, n}}
\newcommand\ltwofnl{L^2_{f, n-1}}
\newcommand\ltwofnp{L^2_{f, n+1}}
\newcommand\ltwoFn{L^2_{F, n}}
\newcommand\dotss{,\,\dots,\,}
\newcommand\domU{\mathcal{D}^U}
\newcommand\tcX{\tilde{\mathfrak{X}}}
\newcommand\tcY{\tilde{\mathfrak{Y}}}
\title{Regress-Later Monte Carlo for optimal control  of Markov processes}
\author{Alessandro Balata\thanks{email: A.Balata@leeds.ac.uk} }
\author{Jan Palczewski\thanks{email: J.Palczewski@leeds.ac.uk}}
\affil{School of Mathematics, University of Leeds, LS2 9JT, Leeds, United Kingdom}
\begin{document}

\maketitle

\begin{abstract}
We develop two Regression Monte Carlo algorithms (value and performance iteration) to solve general problems of optimal stochastic control of discrete-time Markov processes. We formulate our method within an innovative framework that allow us to prove the speed of convergence of our numerical schemes. We rely on the Regress Later approach unlike other attempts which employ the Regress Now technique. We exploit error bounds obtained in our proofs, along with numerical experiments, to investigate differences between the value and performance iteration approaches. Introduced in \citet{Tsitsiklis2001} and \citet{Longstaff2001} respectively, their characteristics have gone largely unnoticed in the literature; we show however that their differences are paramount in practical solution of stochastic control problems. Finally, we provide some guidelines for the tuning of our algorithms.
\end{abstract}


\section{Introduction}
\label{introduction}
In this paper we will introduce and prove the convergence of two Regress-Later Monte Carlo schemes for the solution of discrete-time general Markovian stochastic control problems. 

Let us consider a controlled Markov process $(X_n)_{n=0}^N$ on a domain $\mathcal{D} \subseteq$ $\mathbb{R}^d$  specified as follows:
\begin{equation}
\label{eq:Xdynamics}
X_{n+1} = \varphi (n, X_n, \xi_n, u_n),
\end{equation}
where $\varphi$ is a Borel-measurable function and $\{\xi_n\}$ is a collection of i.i.d. uniformly distributed random variables on $[0,1]$. Without loss of generality, we assume that controls $(u_n)$ are in a feedback form, i.e.,
\[
u_n = u_n (X_n)
\]
and belong to a compact set $\domU \subset \mathbb{R}^q$. We will denote the set of controls of the above form by $\mathcal{U}$. Notice that due to $\varphi$ depending on $n$ and $X_n$, our setting can accomodate sets of controls that are state dependent. 

In this setting, we define a pathwise performance measure
\begin{equation}
\label{eq:Jperformance}
J(n,(X_s,u_s)_{s=n,\,\dots,\,N})=\sum_{s=n}^{N-1} f(s,X_s,u_s)+g(X_N),\quad n=0\dots N,
\end{equation}
where $g$ is the terminal condition and $f$ is the running reward. 
We want to study the problem of computing the value function and an optimal control coresponding to the following optimisation problem:
\begin{equation}
\label{eq:problem}
V(n,x)=\sup_{u\in \mathcal{U}}\mathbb{E}\Bigl[ J(n,(X_s,u_s)_{s=n,\,\dots,\,N}) |X_n=x\Bigr].
\end{equation}

A convenient equivalent representation of   \eqref{eq:problem} is given in terms of dynamic programming equation which allows to recursively compute the value function backward in time from the known terminal condition:
\begin{equation}
\label{eq:dynamicvalue}
\begin{cases}
V(N,x)=g(x),&\\
V(n,x)=\max\limits_{u\in \domU}\Bigl\{ f(n,x,u)+\mathbb{E}\bigl[V(n+1,X_{n+1}) |X_n=x,u_n=u\bigr]\Bigr\}.&
\end{cases}
\end{equation}
The dynamic programming equation inspires a numerical method to compute the value function and the control: starting from the known terminal condition at time $N$ we recursively compute the value function, backward in time, optimising the one step performance measure. The main difficulty in implementing such a strategy is the estimation of the conditional expectation $\mathbb{E}[V(n+1,X_{n+1}) |X_n=x,u_n=u]$.

The Regression Monte Carlo scheme has been pupularised by \citet{Longstaff2001} and \citet{Tsitsiklis2001}, however it is the result of the contribution of different papers among which \citet{Carriere1996} and \citet{Broadie2000}. The successful idea behind Regression Monte Carlo is to approximate the conditional expectations appearing in the dynamic programming equation \eqref{eq:dynamicvalue} with a projection on the space generated by a set of basis functions. In practice, a set of Monte Carlo simulated trajectories is used to iterate the dynamic programming equation and to estimate the regression coefficients by approximating expectations with sample averages. Regression Monte Carlo has been successfully applied to the evaluation of complex financial derivatives, including American options. However, the standard approach cannot be applied to \ref{eq:problem} where the control directly affects the dynamics of the state processes.

Contrary to the case of option pricing, or uncontrolled dynamics in general, in which the conditional expectation of $V(n+1,X_{n+1}) |\,X_n=x$ can be estimated from the cross sectional information contained in simulated trajectories,  in the case of controlled Markov processes such trajectories depend on the control process $(u_n)$ and cannot therefore be simulated beforehand. In particular, given that the forward trajectories should be computed for a fixed values of the control, the estimated conditional expectation will be relevant only for that particular choice of the control. 

A different approach it that of approximating the discrete time control problem with a continuous time one, as long as $T/N$ is small. Under mild conditions a continuous time control problem can be solved using Pontryagin principle and be reformulated in terms of a system of coupled FBSDEs. The advantage of this formulation is that the effect of the control is partially decoupled from the controlled process, allowing, after a further time discretization, to solve the problem using traditional regression Monte Carlo.  
The literature in this area is vast, note the French school in particular with \citet{Gobet2005} and \citet{Gobet2016} and the review given in \citet{Bender2012}, for example.

Our approach however is different. We maintain the original formulation of the discrete-time problem and solve directly the dynamic programming equation by exploiting the characteristic of a particular variant of regression Monte Carlo called Regress Later. We introduced this approach in the particular framework of control of degenerate processes in \cite{Balata2017i}.
Regress-later approximations can be traced back to \citet{Broadie2000}, \citet{Glasserman2002} and \citet{Broadie2004}, and more recently were studied in \citet{Beutner, Jain2015}. In those papers the regress-later approach is regarded as a tool to reduce the approximation error in traditional exogenous Regression Monte Carlo problems (e.g., American option pricing). To our knowledge, its ground-breaking potential for problems with endogeneous (controlled) state variables has not been recognised yet. A recent application to the solution of systems of FBSDEs can be found in \citet{Briand2014} and \citet{Pelsser2017}.
In applications, \citet{Secomandi2014} compare regress-now and regress-later estimates in the context of energy real options, while \citet{Selvaprabu2017} explore the links between Regression Monte Carlo and Approximate Dynamic Programming.

One main alternative has been proposed in order to generalise the standard Regression Monte Carlo method to problems of control of Markov processes: the control randomisation approach proposed in \cite{Kharroubi2013} and \citet{Langrene2015}. The technique makes up for the limitations of the traditional Regress Now approach by explicitly introducing dependence on the control in the basis functions, in turn obtaining an estimated conditional expectation that depends on the choice of the control. In order for the regression approximation to have the correct statistical properties, an initial set of random trajectories of the control should be simulated and then used in the estimation of the projection coefficients. A comparison between the two methods can be found in \cite{Balata2018stochmkv}. General conclusions of this study are that Regress Later is faster (fewer arguments of basis functions) and easier to tune than Control Randomisation which is highly dependent on the choice of the initial randomised control.

Proofs of convergence and error bounds for different specifications of Regression Monte Carlo have appeared in the literature over years, c.f. \citet{Clement2002} and \citet{Beutner}. All the available studies, however, deal with the uncontrolled dynamics. In the case of control randomisation, a proof of convergence is not available. Error bounds for continuous-time stochastic control problems are, however, available when the Pontryagin principle is employed to rewrite the problem as a system of FBSDEs, see \citet{Lemor2006}.

A second, largely unnoticed, characteristics of regression Monte Carlo algorithms is the actual function to be projected backward in time by the regression approximation. \citet{Tsitsiklis2001} proposed the well known value iteration approach, which directly follows from the dynamic programming equation \eqref{eq:dynamicvalue} and consist of projecting the estimated value function backward in time. \citet{Longstaff2001}, on the other hand, proposed a so-called ``policy iteration/path recomputation/performance iteration'' approach which computes first the pathwise values of a sequence of decisions from different starting point, and then project them backward in time. The two approaches have been studied in the framework of optimal stopping problems by \citet{Egloff2007}, which introduces an hybrid method that takes advantage of the low bias of performance iteration and low variance of value iteration. To the best of our knowledge however a systematic analysis of the differences between the two techniques, in the context of controlled Markov processes in particular, has never been carried over. 

The contribution of this paper is threefold: we give a systematic description of a powerful but relatively simple algorithm to solve general problems of stochastic control of discrete-time Markov processes providing theoretical and empirical results. The Regression Monte Carlo approach relies on the choice of basis functions and a training measure, and we provide guidance on the selection of choices. We prove convergence and derive error bounds of two Regress Later-based numerical schemes, enriching the literature with both new, effective and provably convergent numerical schemes, and a new framework within which convergence of different Regression Monte Carlo schemes can be proved. We give theoretical and heuristic evidence of the difference between value and performance iteration schemes through our error bounds, and through a numerical example that showcases the most interesting characteristics of the two types of iteration.

The rest of the paper is organised as follows: in Section \ref{s:assumptions} we present the mathematical framework and the assumptions needed for proving the convergence of the numerical scheme. Section \ref{s:RLMC} follows with a rigorous presentation of Regress Later in both the value and performance iteration specification and convergence theorems. The last part of this section includes a discussion of differences between value and performance iteration, along with some numerical examples. In Section \ref{proofs} we collect proofs of the main theorems. Finally, in Section \ref{sec:numerics} we present two numerical experiments, the first aimed at showing that the algorithms converge to the exact solution as expected, the second aimed at highlighting the differences between value and performance iteration schemes. Conclusions are drown in Section \ref{s:conclusion}

\section{Assumptions and preliminary results}
\label{s:assumptions}
In this section we present some of the standing Assumptions of the paper and some methodological results we will use later in the paper.

\subsection{Assumptions}
Let $\mu$ be a probability measure on the space $\mathcal{D}$. We will sometimes refer to it as the \emph{training distribution}.
\begin{assumption}
\label{a:density}
We assume that the process $X$ has a transition density with respect to the measure $\mu$, i.e.,
\[
\mathbb{P}_{\mu}\big(X_{n+1}\in A|X_{n}=x, u_n = u \big)=\int_{A} r(n,x,u;y)\mu(dy)
\]
and, in addition, this density is uniformly bounded
\[
r(n, x, u; y) \le \bar{R}^2, \qquad \forall\ n,x,u,y.
\]
\end{assumption}

\begin{remark}
Assumption \ref{a:density}, in most cases, is satisfied only when compact domains are considered. Therefore, even though we do not need to explicitly assume compactness of the domain $\mathcal{D}$ in our proofs, truncation could be necessary.
\end{remark}

\begin{assumption}
\label{a:normV}
The running profit function $f$ and the terminal condition $g$ are measurable and bounded, i.e. $\|f\|_{\infty}+\|g\|_{\infty}<\infty$.
\end{assumption}

\begin{remark}
\label{r:normVa}
The value function $V(n,x)$ is bounded, i.e. $V(n,x)<\Gamma$ for all $n=1,\,\dots,\,N$ and $x\in \mathcal{D}$. A trivial bound follows from the boundedness of $f$ and $g$:  $\Gamma\le\bar{\Gamma}:=(N-1)\|f\|_{\infty}+\|g\|_{\infty}$.
\end{remark}

We will denote by $\ltwomu=L^2(\mathcal{D},\mu)$ the Hilbert space of  real-valued functions on $\mathcal{D}$ that are square integrable with respect to $\mu$.

\begin{definition}
A family of $K$ linearly independent functions $\{\phi_k(\cdot)\}_{k=1}^{K}:\mathcal{D}\to\mathbb{R}$ generating a linear subspace of $\ltwomu$ is called a family of \emph{basis functions}.
\end{definition}
Due to practical reasons that will become clear later on, we will neither assume that the functions are ortogonal, nor that their norms are equal to $1$. 

\begin{definition}\label{def:hat_phi}
Denote by $\hat{\phi}^n_k$, $k=1\dotss K$, $n=0 \dotss N-1$:
\[
\hat{\phi}_k^n\left(x,u\right) = \mathbb{E}\left[\phi_k\left(X_{n+1}\right)|X_n=x,u_n=u\right]= \int_\mathcal{D} \phi_k(y) r(n, x, u; y) \mu(dy).
\] 
\end{definition}

\begin{assumption}
\label{a:semicont_hat_phi}
The functions $f$ and $\hat \phi_k^n$ are upper semi-continuous in $u$.
\end{assumption}

\begin{remark}
The upper semi-continuity requested in Assumption \ref{a:semicont_hat_phi} is required to assert existence of an optimiser in our algorithms. We impose it only for convenience of notation and proofs, but it can be relaxed easily by considering $\varepsilon$-optimisers for a sufficiently small $\varepsilon > 0$ and obvious modifications of statements of error bounds.
\end{remark}

%

\subsection{Random projection operator}

Let us introduce now the exact projection operator $\Pi_{K}$ on $\ltwomu$ which acts projecting its argument onto the space generated by the basis functions, i.e. $lin(\phi_1\dotss\phi_K)\subset\ltwomu$. For $h\in \ltwomu$, we have $\Pi_{K}h=\sum_{k=1}^K\alpha_k \phi_k$ with the coefficients $\pmb{\alpha} = (\alpha_1, \ldots, \alpha_K)^T$ given by 
\begin{equation}
\label{eq:alpha}
\pmb{\alpha}=\mathcal{A}_K^{-1}\big\langle h,\pmb{\phi}\big\rangle_{\ltwomu},
\end{equation}
where $\pmb{\phi}=(\phi_1\dotss\phi_K)^T$ and $\mathcal{A}_K=\big\langle\pmb{\phi},\pmb{\phi}^T \big\rangle_{\ltwomu}$. The scalar product in $\ltwomu$ can be written as an expectation with respect to $\mu$, in the sense that:
\begin{equation}
\label{eq:scalar}
\big\langle h,\pmb{\phi}\big\rangle_{\ltwomu}=\mathbb{E}_{\tilde{X}\sim\mu}\Big[h(\tilde{X})\pmb{\phi}(\tilde{X})\Big], \text{ and }
 \mathcal{A}_K=\mathbb{E}_{\tilde{X}\sim\mu}\Big[\pmb{\phi}(\tilde{X})\pmb{\phi}(\tilde{X})^T\Big].
\end{equation}
This guides us to a Monte Carlo estimator of $\pmb{\alpha}$. We draw $M$ i.i.d. copies $\tilde{X}^1, \ldots, \tilde{X}^M$ of $\tilde{X} \sim \mu$ which we call the \emph{training points}. For $h\in\ltwomu$ we approximate $\pmb{\alpha}$ by
\begin{equation}
\label{eq:est_alpha}
\hat{\pmb{\alpha}}=\mathcal{A}_K^{-1}\frac{1}{M}\sum_{m=1}^M\big[h(\tilde{X}^m)\pmb{\phi}(\tilde{X}^m)\big] 
\end{equation}
and define the \emph{random projection operator} $\hat{\Pi}_{K} h=\sum_{k=1}^K\hat{\alpha}_k \phi_k$. 
\begin{remark}
In formula \eqref{eq:est_alpha}, we assume that $\mathcal{A}_K$ can be evaluated exactly (or precomputed with a very high precision) as it depends only on our choice of basis functions and the measure $\mu$. This compares favourably (in terms of speed and accuracy) to classical regression Monte Carlo in which both expectations in \eqref{eq:scalar} have to be approximated at each time step via Monte Carlo averages.
\end{remark}

Denote by $\ltwom$ the space linked to the training points, $\ltwom = L^2 (\mathcal{D}^M, \mu^M)$, and we write $\ltwoe=\ltwom\times\ltwomu$. 
Notice that we have $\hat{\Pi}_{K}h\in \ltwom \times lin(\phi_1, \ldots, \phi_K) \subset\ltwoe$ because the random projection coefficients are functions of $(\tilde{X}^1\dotss \tilde{X}^M)$, i.e. $\hat{\pmb{\alpha}}=\hat{\pmb{\alpha}}(\tilde{X}^1\dotss \tilde{X}^M)\in(\ltwom)^K$.

\subsection{Extension of  random projection operator}

We extend the projection operator introduced above to functions living in spaces bigger than $\ltwomu$. This is to introduce the notation which we will need later and does not involve any further mathematical complications. Define the space $\ltwomn = L^2(D^{Mn}, \mu^{Mn})$ generated by $n$ collections of M training points, $n=1\dotss N$, denoted by $\{\tilde{X}^m_s\}_{m=1,s=N-n+1}^{M,N}$. The unusual indexing is related to times at which training points are placed while iterating backwards through the dynamic programming equation \eqref{eq:dynamicvalue}. We will write $\ltwoen=\ltwomn\times\ltwomu$ with $\ltwoen \ni h=h(\tcX_{n};\tilde{X})$, where we identify $\tcX_{n} = \{\tilde{X}^m_s\}_{m=1,s=N-n+1}^{M,N}$ with the first set of coordinates corresponding to $\ltwomn$ and $\tilde X$ with the remaining coordinate of $\ltwomu$. For the brevity of notation, we will write , $L^2_{e,1}=L^2_{e}$, $L^2_{e,0}=L^2_{\mu}$, and $\tcX_0 = \{\emptyset \}$. When evaluating the norm $\| \cdot \|_{\ltwoen}$ we will often denote by `$(\cdot)$' the argument corresponding to the $\ltwomu$ component.

Define an extended projection operator as
\[
\Pi_K^{N-n} h(\tcX_n\,;\cdot)=\big(\pmb{\alpha}^{N-n}\big)^T\, \pmb{\phi} (\cdot),
\]
where
\[
\big(L^2_{M,n}\big)^K \ni \pmb{\alpha}^{N-n}=\pmb{\alpha}^{N-n} \big(\tcX_n\big)
= \mathcal{A}_K^{-1}\mathbb{E}_{\tilde{X}\sim\mu}\Big[ h(\tcX_n; \tilde X)\pmb{\phi}(\tilde{X})\Big| \tcX_n\Big].
\]
Notice that  $\Pi^{N-n}_K h \in \ltwoen$ since the coefficients $\pmb\alpha^{N-n}$ still depend on the randomness contained in $\tcX_n = \{\tilde{X}^m_s\}_{m=1,s=N-n+1}^{M,N}$. The superscript $N-n$ in $\Pi^{N-n}_K h$ indicates the dependence on $\tcX_n$. However, from a mathematical perspective, for fixed $\tcX_n$ the operator $\Pi_K^{N-n}$ is identical to $\Pi_K$, and, indeed, it can be defined pointwise for each $\tcX_n$.

Similarly as above we define the ``random projection operator'' acting on $h\in\ltwoen$ by
\[
\hat\Pi^{N-n}_K h(\tcX_n\,;\cdot)=\big(\hat \alpha^{N-n}\big)^T\, \pmb{\phi} (\cdot),
\]
where
\[
\hat{\pmb{\alpha}}^{N-n}=\hat{\pmb{\alpha}}^{N-n}(\tcX_n,\{\tilde{X}^m_{N-n}\}_{m=1}^{M})=\mathcal{A}_K^{-1}\frac{1}{M}\sum_{m=1}^M h(\tcX_n;\tilde{X}^m_{N-n})\pmb{\phi}(\tilde{X}^m_{N-n}),
\]
and $\{\tilde X^m_{N-n}\}_{m=1}^M$ are i.i.d. random variables with the distribution $\mu$.
It follows that $\hat{\Pi}^{N-n}_K h \in \ltwoenp$. Notice then that the random projection operator produces functions which live in a bigger space than the space where $h$ lives, in particular every projection adds one layer of training points so that the original space is enlarged by the addition of $\ltwom$. For a graphical representation of the spaces introduced in this section see Figure \ref{f:spaces}.

\begin{figure}
\hspace{110pt}\def\svgwidth{0.6\linewidth} 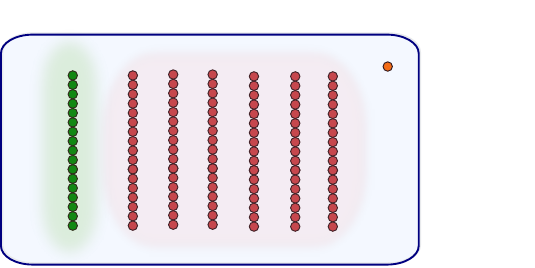
\caption{In the figure above we give, for the convenience of the reader, a graphical representation of the square integrable spaces introduced so far. Notice that every element is given by the measure $\mu$, a different number of them is considered in the different spaces.} 
\label{f:spaces}
\end{figure}

\subsection{Preliminary results}

We present now two useful results relating the exact and random projection operators introduced above.

\begin{lemma}[Projection error]
\label{l:proj_error}
For $h\in \ltwoen$, the error of the random projection operator is bounded as follows:
\[
\Big\|\hat{\Pi}^{N-n}_K h (\tcX_n; \cdot) -\Pi^{N-n}_K h (\tcX_n; \cdot) \Big\|_{\ltwoe}\le \big\|\mathcal{A}_K^{-1/2}\big\|_2\frac{1}{\sqrt{M}}SDev_{\tilde{X}\sim\mu}\Big(h(\tcX_n\,;\tilde{X})\pmb{\phi}(\tilde{X})\Big),
\]
where $\tcX_n$ is assumed fixed, 
\[
SDev_{\tilde{X}\sim\mu}\Big(h(\tcX_n\,;\tilde{X})\pmb{\phi}(\tilde{X})\Big) :=
\left( \sum_{k=1}^K Var_{\tilde{X}\sim\mu}\Big(h(\tcX_n\,;\tilde{X})\phi_k(\tilde{X})\Big) \right)^{1/2},
\]
and $\| \mathcal{A}_K^{-1/2} \|_2 = \max \{ \|\mathcal{A}_K^{-1/2} x\|_2:\ x \in \mathbb{R}^K, \ \|x\|_2 = 1 \}$ is the matrix operator norm of $\mathcal{A}_K^{-1/2}$ and $\|x\|_2$ is the Euclidean norm of $x$.
\end{lemma}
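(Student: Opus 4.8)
The plan is to reduce everything to the coefficient vectors and then exploit the fact that the error is a scaled, centred empirical average of i.i.d.\ terms. Throughout $\tcX_n$ is fixed, so the only randomness is carried by the fresh training points $\{\tilde X^m_{N-n}\}_{m=1}^M$, and $\ltwoe = \ltwom \times \ltwomu = L^2(\mathcal{D}^M \times \mathcal{D}, \mu^M \otimes \mu)$. Writing $\pmb{\beta} := \hat{\pmb{\alpha}}^{N-n} - \pmb{\alpha}^{N-n}$, a random vector depending only on the fresh points, the two projections differ by $\hat\Pi^{N-n}_K h - \Pi^{N-n}_K h = \pmb{\beta}^T \pmb{\phi}(\cdot)$. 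I would integrate first over the $\ltwomu$ coordinate and use $\mathcal{A}_K = \int_{\mathcal{D}} \pmb{\phi}\,\pmb{\phi}^T \, d\mu$ to get, for each realisation of the fresh points, $\int_{\mathcal{D}} (\pmb{\beta}^T \pmb{\phi})^2 \, d\mu = \pmb{\beta}^T \mathcal{A}_K \pmb{\beta}$, so that $\|\hat\Pi^{N-n}_K h - \Pi^{N-n}_K h\|_{\ltwoe}^2 = \mathbb{E}\big[\pmb{\beta}^T \mathcal{A}_K \pmb{\beta}\big]$, the expectation being over the fresh points.

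Next I would insert the explicit form of $\pmb{\beta}$. Setting $\pmb{Z}_m := h(\tcX_n; \tilde X^m_{N-n})\,\pmb{\phi}(\tilde X^m_{N-n})$, these are i.i.d.\ (for fixed $\tcX_n$) with mean $\mathbb{E}[\pmb{Z}_m] = \mathcal{A}_K\,\pmb{\alpha}^{N-n}$, so by the definitions of the exact and random operators one has $\pmb{\beta} = \mathcal{A}_K^{-1}\bar{\pmb{S}}$ with $\bar{\pmb{S}} := \frac1M\sum_{m=1}^M (\pmb{Z}_m - \mathbb{E}[\pmb{Z}_m])$. Since $\mathcal{A}_K$ is the symmetric positive-definite Gram matrix of the linearly independent basis, $\mathcal{A}_K^{-1/2}$ is well defined and symmetric, and hence $\pmb{\beta}^T \mathcal{A}_K \pmb{\beta} = \bar{\pmb{S}}^T \mathcal{A}_K^{-1}\bar{\pmb{S}} = \|\mathcal{A}_K^{-1/2}\bar{\pmb{S}}\|_2^2 \le \|\mathcal{A}_K^{-1/2}\|_2^2\,\|\bar{\pmb{S}}\|_2^2$. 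This factorisation is what produces the single power of $\|\mathcal{A}_K^{-1/2}\|_2$ in the statement, since $\pmb{\beta}^T\mathcal{A}_K\pmb{\beta}$ contracts exactly one of the two inverse factors in $\pmb{\beta} = \mathcal{A}_K^{-1}\bar{\pmb{S}}$.

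It then remains to take expectations. Because $\bar{\pmb{S}}$ is a centred average of i.i.d.\ vectors, its coordinates have mean zero, the cross terms vanish, and $\mathbb{E}\big[\|\bar{\pmb{S}}\|_2^2\big] = \sum_{k=1}^K \mathrm{Var}(\bar S_k) = \frac1M \sum_{k=1}^K \mathrm{Var}_{\tilde X\sim\mu}\big(h(\tcX_n;\tilde X)\phi_k(\tilde X)\big)$, which is precisely $\frac1M\,SDev_{\tilde X\sim\mu}\big(h(\tcX_n;\tilde X)\pmb{\phi}(\tilde X)\big)^2$. Combining this with the previous bound on $\mathbb{E}[\pmb{\beta}^T\mathcal{A}_K\pmb{\beta}]$ and taking square roots yields the claimed inequality.

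The computation is essentially routine; the points needing care are bookkeeping ones. I would confirm that the $\ltwoe$ norm factorises as integration against $\mu$ followed by expectation over the fresh points (Fubini--Tonelli, legitimate because the integrand is nonnegative), verify that the cross terms in $\mathbb{E}[\|\bar{\pmb{S}}\|_2^2]$ drop out by independence and centring, and track that $\mathcal{A}_K$ and its (symmetric) square root commute appropriately. The only genuinely substantive estimate is the submultiplicative step $\|\mathcal{A}_K^{-1/2}\bar{\pmb{S}}\|_2 \le \|\mathcal{A}_K^{-1/2}\|_2\,\|\bar{\pmb{S}}\|_2$, and I expect this—together with keeping the fixed $\tcX_n$ correctly as a parameter rather than a source of integration—to be the main thing to get right.
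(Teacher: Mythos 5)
Your proposal is correct and follows essentially the same route as the paper's proof: express the difference of projections as a linear combination of basis functions with a centred empirical-average coefficient vector, integrate over $\mu$ first to reduce the squared $\ltwoe$-norm to the quadratic form $\bar{\pmb{S}}^T\mathcal{A}_K^{-1}\bar{\pmb{S}} = \|\mathcal{A}_K^{-1/2}\bar{\pmb{S}}\|_2^2$, bound it by the operator norm, and conclude with the i.i.d.\ variance computation giving the $1/M$ factor. The only difference is cosmetic bookkeeping (you isolate the coefficient difference $\pmb{\beta} = \mathcal{A}_K^{-1}\bar{\pmb{S}}$ explicitly, while the paper keeps $\mathcal{A}_K^{-1}$ attached to $\pmb{\phi}$), and your version in fact makes clearer why only a single power of $\|\mathcal{A}_K^{-1/2}\|_2$ appears, which the paper's final displayed line obscures with a typo ($\|\mathcal{A}_K^{-1}\|_2^2$ where $\|\mathcal{A}_K^{-1/2}\|_2^2$ is meant).
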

\begin{proof}
By the definition of projection operators we have
\begin{align*}
&\Big\|\hat{\Pi}^{N-n}_K h (\tcX_n; \cdot)-\Pi^{N-n}_K h (\tcX_n; \cdot)\Big\|_{\ltwoe}\\
&=\Big\|\Big(\mathcal{A}_K^{-1}(\frac{1}{M}\sum_{m=1}^M h(\tcX_n;\tilde{X}^m_{N-n}) \pmb{\phi}(\tilde{X}^m_{N-n})-\mathbb{E}_{z\sim\mu}\big[h (\tcX_n; z) \pmb{\phi} ( z)\big])\Big)^T\pmb{\phi} ( \cdot)  \Big\|_{\ltwoe}\\
&=\Big\|\Big(\frac{1}{M}\sum_{m=1}^M h (\tcX_n; \tilde{X}^m_{N-n}) \pmb{\phi}(\tilde{X}^m_{N-n})-\mathbb{E}_{z\sim\mu}\big[h(\tcX_n,z) \pmb{\phi}(z)\big]\Big)^T\mathcal{A}_K^{-1}\pmb{\phi} (\cdot)  \Big\|_{\ltwoe}
=\Big\| \pmb{\beta}_n^T\mathcal{A}_K^{-1} \pmb{\phi} (\cdot)\Big\|_{\ltwoe},
\end{align*}
where $ \pmb{\beta}_n=\frac{1}{M}\sum_{m=1}^M h (\tcX_n;\tilde{X}^m_{N-n}) \pmb{\phi}(\tilde{X}^m_{N-n})-\mathbb{E}_{z\sim\mu}\big[h (\tcX_n; z) \pmb{\phi} (z)\big]$ and we used that $\mathcal{A}_K$ is symmetric.
We have
\begin{align*}
\Big\| \pmb{\beta}_n^T\mathcal{A}_K^{-1} \pmb{\phi} (\cdot) \Big\|^2_{\ltwoe}
&=\mathbb{E}_{e}\Big[ \pmb{\beta}_n^T\mathcal{A}_K^{-1} \pmb{\phi} (\cdot) \pmb{\phi} (\cdot)^T\mathcal{A}_K^{-1} \pmb{\beta}_n\Big]
=\mathbb{E}_{M}\Big[ \pmb{\beta}_n^T\mathcal{A}_K^{-1}\mathbb{E}_{\mu}[ \pmb{\phi} (\cdot)\pmb{\phi} (\cdot)^T]\mathcal{A}_K^{-1}\pmb{\beta}_n\Big]\\
&=\mathbb{E}_{M}\Big[ \pmb{\beta}_n^T\mathcal{A}_K^{-1}\pmb{\beta}_n\Big]
= \mathbb{E}_{M}\Big[ \big\| \mathcal{A}_K^{-1/2}\pmb{\beta}_n\big\|_2^2\Big]
\le \big\| \mathcal{A}_K^{-1/2}\big\|_2^2\ \mathbb{E}_{M}\Big[ \|\pmb{\beta}_n\|_2^2\Big]\\
&
= \big\|\mathcal{A}_K^{-1}\big\|_2^2\; \frac{1}{M} Var_{\tilde X \sim \mu} \Big(h(\tcX_n\,;\tilde{X})\pmb{\phi}(\tilde{X})\Big),
\end{align*}
where in the last equality we used that $\{\tilde X^m_{N-n}\}_{m=1}^M$ are independent and distributed as $\mu$, and that $\mathbb{E}_M [\pmb{\beta}_n] = 0$.
\end{proof}

\begin{lemma}[Standard Deviation]
\label{l:STDEV}
For a bounded function $h \in \ltwomu$ we have:
\[
SDev_{\tilde{X}\sim\mu}\Big(h(\tilde{X}) \pmb{\phi}(\tilde{X}) \Big)\le \sqrt{K}\|h\|_{\infty} \max\limits_{k=1,\,\dots,\,K}\|\phi_k\big\|_{\ltwomu}.
\]
\end{lemma}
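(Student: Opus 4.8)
The plan is to work termwise in the definition of $SDev$ given in Lemma~\ref{l:proj_error}, now specialised to a function $h$ with no dependence on the auxiliary training points $\tcX_n$, namely
\[
SDev_{\tilde X \sim \mu}\big(h(\tilde X)\pmb{\phi}(\tilde X)\big) = \Big(\sum_{k=1}^K Var_{\tilde X \sim \mu}\big(h(\tilde X)\phi_k(\tilde X)\big)\Big)^{1/2},
\]
and to control each variance by a second moment, which is where the boundedness of $h$ enters. First I would apply the elementary inequality $Var(Y) \le \mathbb{E}[Y^2]$, valid for any square-integrable random variable, to $Y = h(\tilde X)\phi_k(\tilde X)$, obtaining
\[
Var_{\tilde X \sim \mu}\big(h(\tilde X)\phi_k(\tilde X)\big) \le \mathbb{E}_{\tilde X \sim \mu}\big[h(\tilde X)^2 \phi_k(\tilde X)^2\big].
\]

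Next I would pull out the sup-norm of $h$: since $|h(\tilde X)| \le \|h\|_\infty$ for $\mu$-almost every $\tilde X$, the right-hand side is at most $\|h\|_\infty^2\, \mathbb{E}_{\tilde X \sim \mu}[\phi_k(\tilde X)^2] = \|h\|_\infty^2 \|\phi_k\|_{\ltwomu}^2$, recalling that the squared $\ltwomu$-norm is exactly the second moment under $\mu$. Summing over $k = 1,\dots,K$ and replacing each $\|\phi_k\|_{\ltwomu}^2$ by its maximum over $k$ yields
\[
\sum_{k=1}^K Var_{\tilde X \sim \mu}\big(h(\tilde X)\phi_k(\tilde X)\big) \le K\,\|h\|_\infty^2 \max_{k=1,\dots,K}\|\phi_k\|_{\ltwomu}^2,
\]
and taking square roots gives the claimed bound.

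Since the argument is merely a short chain of elementary inequalities, I do not anticipate any genuine obstacle. The only points deserving a little care are that $h \in \ltwomu$ together with its boundedness makes each product $h\phi_k$ square-integrable, so that the variances are finite and the termwise manipulation is legitimate, and that the inequality $Var(Y)\le\mathbb{E}[Y^2]$ rather than an exact variance computation is what keeps the bound clean and free of any mean-correction term.
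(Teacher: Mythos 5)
Your proof is correct and follows essentially the same route as the paper's: bound each variance $Var_{\tilde X\sim\mu}\big(h(\tilde X)\phi_k(\tilde X)\big)$ by the second moment, extract $\|h\|_\infty^2$, and sum over $k$ before taking the square root. The additional remarks on square-integrability of $h\phi_k$ are a minor refinement the paper leaves implicit, but the argument is identical in substance.
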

\begin{proof}From the definition in Lemma \ref{l:proj_error}
\begin{equation}\label{eqn:aa1}
SDev_{\tilde{X}\sim\mu}\Big( h(\tilde{X}) \pmb{\phi}(\tilde{X}) \Big)
=\Big(\sum_{k=1}^K  Var_{\tilde{X}\sim\mu}\Big[h(\tilde{X}) \phi_k(\tilde{X}) \Big]\Big)^{\frac{1}{2}}.
\end{equation}
We bound now the variance by the second moment, and using that $\|h\|_{\infty} < \infty$ we have for each $k$
\[
\begin{split}
 Var_{\tilde{X}\sim\mu}\Big[h(\tilde{X}) \phi_k(\tilde{X}) \Big]
&\le \mathbb{E}_{\mu}\Big[\big(h(\tilde{X}) \phi_k(\tilde{X})\big)^2 \Big] \le \|h\|_{\infty}^2\|\phi_k \|_{\ltwomu}^2.
\end{split}
\]
Inserting this bound into \eqref{eqn:aa1} completes the proof.
\end{proof}

\begin{remark}[Norm of the true projection operator]
\label{l:proj_bound}
Let $h$ be an element of $\ltwoen$. The true projection operator $\Pi^{N-n}_{K}$ admits the following bound:
\[
\Big\|\Pi^{N-n}_K h (\tcX_n; \tilde{X}) \Big\|_{\ltwoen}\le\Big\|  h (\tcX_n; \tilde{X}) \Big\|_{\ltwoen}.
\]
\end{remark}

\begin{lemma}[Bound on conditional expectation]
\label{l:traj}
For any $h\in \ltwoen$, we have the following bound on the norm of the conditional expectation
\[
\begin{split}
\Big\| \sup_{u \in \mathcal{D}^U} \mathbb{E}\big[h (\tcX_n; X_{n+1})\big|X_n=\tilde{X}, u_n = u) \big]\Big\|_{\ltwoen}
&\le\bar{R} \| h (\tcX_n; \tilde{X})\|_{\ltwoen},
\end{split}
\]
where 
\[
\mathbb{E}\big[h (\tcX_n; X_{n+1})\big|X_n=x, u_n = u \big] = \int_{\mathcal{D}} h (\tcX_n; y)\; r(n, x, u; y) \mu(dy). 
\]
\end{lemma}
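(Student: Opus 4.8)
The plan is to bound the supremum over $u$ pointwise and then apply the Cauchy--Schwarz inequality together with Assumption~\ref{a:density}. First I would fix $\tcX_n$ and a point $\tilde X = x$, and estimate the inner quantity. For each admissible $u$, writing the conditional expectation as the integral against the transition density gives
\[
\Big| \mathbb{E}\big[h(\tcX_n; X_{n+1}) \big| X_n = x, u_n = u\big] \Big|
= \Big| \int_{\mathcal{D}} h(\tcX_n; y)\, r(n,x,u;y)\, \mu(dy) \Big|.
\]
The natural move is to write $r = \sqrt{r}\cdot\sqrt{r}$ and apply Cauchy--Schwarz in $\ltwomu$, so that the integral is bounded by $\big(\int_{\mathcal{D}} h(\tcX_n;y)^2\, r(n,x,u;y)\,\mu(dy)\big)^{1/2}\big(\int_{\mathcal{D}} r(n,x,u;y)\,\mu(dy)\big)^{1/2}$. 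The second factor equals $1$ because $r$ is a density with respect to $\mu$, and in the first factor I would use the uniform bound $r \le \bar R^2$ from Assumption~\ref{a:density} to pull out $\bar R$, leaving $\bar R \big(\int_{\mathcal{D}} h(\tcX_n;y)^2\,\mu(dy)\big)^{1/2}$. Crucially, this last bound is independent of $u$, so taking the supremum over $u \in \domU$ on the left does not change the right-hand side.

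Next I would square and integrate the resulting pointwise bound to pass to the $\ltwoen$ norm. Since the bound $\bar R \big(\int_{\mathcal{D}} h(\tcX_n;y)^2\,\mu(dy)\big)^{1/2}$ no longer depends on the outer variable $\tilde X = x$, squaring it yields $\bar R^2 \int_{\mathcal{D}} h(\tcX_n;y)^2\,\mu(dy)$. Taking expectation over $\tilde X \sim \mu$ (the $\ltwomu$ component) is then vacuous in $x$ and simply reproduces the same integral, while taking expectation over the training randomness $\tcX_n \sim \mu^{Mn}$ (the $\ltwomn$ component) gives exactly $\bar R^2 \|h(\tcX_n;\tilde X)\|_{\ltwoen}^2$. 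Taking square roots delivers the claimed inequality.

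The main obstacle, such as it is, lies in handling the supremum measurably: one must ensure that $x \mapsto \sup_{u} \mathbb{E}[h(\tcX_n; X_{n+1})\,|\,X_n = x, u_n = u]$ is itself measurable and an element of $\ltwoen$ so that its norm is well defined. This is where Assumption~\ref{a:semicont_hat_phi} (upper semi-continuity in $u$) together with compactness of $\domU$ is invoked, guaranteeing the supremum is attained and measurably selected. Beyond this measurability point, the argument is entirely routine: the pointwise bound is $u$-free, so the supremum is absorbed trivially, and Fubini/Tonelli justifies the interchange of the two layers of integration in the product space $\ltwoen = \ltwomn \times \ltwomu$.
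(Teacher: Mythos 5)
Your proof is correct and is essentially the paper's own argument: the Cauchy--Schwarz step with $r=\sqrt{r}\cdot\sqrt{r}$ is exactly Jensen's inequality for the square function under the probability measure $r(n,x,u;\cdot)\,\mu(dy)$, which is what the paper invokes, followed by the same uniform bound $r\le\bar{R}^2$ and the observation that the resulting estimate is free of $u$ and $\tilde{X}$. Your added remark on measurable selection of the supremum via Assumption~\ref{a:semicont_hat_phi} is a point the paper leaves implicit, but it does not change the substance of the argument.
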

\begin{proof}
Using Jensen inequality and Assumption \ref{a:density}:
\[
\begin{split}
\Big\| \sup_{u \in \mathcal{D}^U} \mathbb{E}\big[h (\tcX_n; X_{n+1})\big|X_n=\tilde{X}, u_n = u) \big]\Big\|_{\ltwoen}^2
&= \mathbb{E}_{M,n}\Big(\sup_{u \in \mathcal{D}^U} \int_\mathcal{D} r(n,\tilde{X},u;y)h (\tcX_n; y)\mu(dy)\Big)^2\\
&\le\mathbb{E}_{M,n}\Big(\sup_{u \in \mathcal{D}^U} \int_\mathcal{D} r(n,\tilde{X},u;y)h^2 (\tcX_n; y)\mu(dy)\Big)\\
&\le\bar{R}^2\; \Big\| h (\tcX_n; \tilde{X})\Big\|_{\ltwoen}^2,
\end{split}
\]
where $\mathbb{E}_{M,n}$ is the expectation with respect to the measure underlying the space $\ltwomn$.
\end{proof}

\section{Regress Later Monte Carlo}
\label{s:RLMC}
This section presents the main contribution of the paper: value and performance iteration algorithms as well as estimates of their convergence speed.


In the classical framework the regression was used to approximate the conditional expectation with respect to $X_n$ directly as a linear combination of basis functions of this variable (e.g., in \eqref{eq:dynamicvalue}). As opposed to this \emph{regress-now} approach, we employ a \emph{regress-later} idea in which conditional expectation with respect to $X_n$ is computed in two stages. First, a conditional expectation with respect to $X_{n+1}$ is approximated in a regression step by a linear combination of basis functions of $X_{n+1}$. Then, analytical formulas are applied to condition this linear combination of functions of future values on present values $X_n$ and a control $u_n$.


\subsection{Value iteration}
\label{s:value}

\begin{algorithm}[tb]
 \caption{Regress-later Monte Carlo algorithm (RLMC) - Value iteration}
 \label{alg:RLMCV}
 \textbf{input:} M, K, $\mu$, $\{\phi\}_{k=1}^K$
 \begin{algorithmic}[1]
\State Pre-compute the inverse of the covariance matrix $\mathcal{A}_K$
 \State Generate i.i.d. training points $\{\tilde{X}_N^{m}\}_{m=1}^{M}$ with the distribution $\mu$.
\State Initialise the value function $\hat V (N, \tilde{X}_{N}^{m}) =g(\tilde{X}_{N}^{m}), \quad  m = 1, \ldots, M.$
 \For{n=N-1 to 0}
\State  $\hat{\pmb{\alpha}}^{n+1}=\mathcal{A}_K^{-1}\frac{1}{M}\sum_{m=1}^{M}\Big[\hat V(n+1,\tilde{X}_{n+1}^{m}) \pmb{\phi}(\tilde{X}_{n+1}^{m})\Big]$
 \State Generate a new layer of i.i.d. training points $\{\tilde{X}_n^{m}\}_{m=1}^{M}$ with the distribution $\mu$.
 \State For all $m$ do
\[
\hat V(n,\tilde{X}_{n}^{m})=\Bigg(\sup_{u \in \domU} \Big\{f(n,\tilde{X}_{n}^{m},u)+\sum_{k=1}^K\hat\alpha^{n+1}_k\hat{\phi}^{n}_k(\tilde{X}_{n}^{m}, u)\Big\}\Bigg)\wedge \Gamma \vee (-\Gamma)
\]
\EndFor
\end{algorithmic}
\textbf{output:} $\{\hat{\alpha}_{n}^k\}_{n,k=1}^{N,K}$
 \end{algorithm}

We will now present regress-later solution to \emph{value iteration} procedure. This approach follows from the most natural approximation of the dynamic programming equation \eqref{eq:dynamicvalue}, in which the conditional expectation of the value function at the next time step is replaced by  its estimator.

We start from time $N$, when the terminal condition is known and we set $\hat{V}(N,x)=V(N,x) = g(x)$, $\forall \,x\in\mathcal{D}$. We move now to time $N-1$. The dynamic programming equation \eqref{eq:dynamicvalue} requires us to compute $\mathbb{E}[V(N,X_N)|X_{N-1}=x,u_{N-1}=u]$. In order to do so we generate $M$ samples $\{\tilde{X}^m_{N}\}_{m=1}^M$ from the distribution $\mu$. These are used for the estimation of the projection coefficients $\hat{\pmb{\alpha}}^{N}$ in the random projection operator $\hat{\Pi}_K\hat{V} = \hat{\Pi}^N_K\hat{V}\in\ltwoe$, i.e., we compute conditional expectation by first projecting $\hat{V}(N, \tilde X)\in\ltwomu$ over the basis functions $\{\phi_k\}_{k=1}^{K}$. Then we compute analytically conditional expectations of the obtained linear combination of basis functions:
\begin{align*}
\mathbb{E}_{N-1, x,u}[V(N,X_N)]\approx\mathbb{E}_{N-1, x,u}[\hat{\Pi}^N_K\hat{V}(N,X_N)] &=\sum_{k=1}^K\hat \alpha_k^{N}\mathbb{E}_{N-1, x,u}[\phi_k(X_N)]\\
&= \sum_{k=1}^K\hat \alpha_k^{N} \hat{\phi}_k^{N-1}\left(x,u\right),
\end{align*}
where $\hat{\phi}_k^{N-1}\left(x,u\right) = \mathbb{E}_{N-1, x, u}[\phi_k(X_N)]$.
We then set
\[
\hat V(N-1,x)=\sup_{u \in \domU} \Big\{f(N-1,x,u)+\sum_{k=1}^K\hat\alpha^{N}_k\hat{\phi}^{N-1}_k(x, u)\Big\},
\]
and we move to next time step $N-2$ with the function $\hat{V}(N-1,x)\in\ltwoe = L^2_{e,1}$ (due to the randomness introduced by the training points $\{\tilde X^m_N\}_{m=1}^M$). Similarly to the previous time step, we project the value function using the random projection operator obtaining $\hat{\Pi}^{N-1}_K\hat{V}(N-1,x)\in L^2_{e,2}$ from which we can compute an estimator of the conditional expectation. 

In general, the approximate value function $\hat{V}(n,\tilde{X})\in L^2_{e,N-n}$ obeys the following dynamic programming equation (note that we will often refer to it as the backward procedure):
\begin{equation}
\label{eq:v_iter}
\begin{cases}
&\hat{V}(N,x)=g(x),\\
&\hat{V}(n,x)=\sup\limits_{u\in\domU}\Big\{f(n,x,u)+\mathbb{E}\big[\hat{\Pi}^{n+1}_K\hat{V}(n+1,\cdot)|X_n=x,u_n=u\big] \Big\} \wedge \Gamma \vee (-\Gamma)\\
&\phantom{\hat{V}(n,x)}= \sup\limits_{u\in\domU}\Big\{f(n,x,u)+\sum_{k=1}^K \hat\alpha^{n+1}_k \hat \phi^{n+1}_k (x,u) \Big\} \wedge \Gamma \vee (-\Gamma),
\end{cases}
\end{equation}
where functions $\hat \phi_k^n$ are introduced in Definition \ref{def:hat_phi}. Details of implementation are collected in Algorithm \ref{alg:RLMCV}.

\begin{remark}
\label{r:normV}
We exploit the bound in Assumption \ref{a:normV} which allows us to truncate the Monte Carlo estimate
\[
\sup\limits_{u\in\domU}\Big\{f(n,x,u)+\sum_{k=1}^K \hat\alpha^{n+1}_k \hat \phi^{n+1}_k (x,u) \Big\}
\]
in \eqref{eq:v_iter} to the inverval $[-\Gamma, \Gamma]$. The true value function satisfies these bounds, so the exceedance of this interval in the above expression may only be caused by approximation errors due to the projection on basis functions and Monte Carlo estimate of $\hat{\pmb{\alpha}}^{n+1}$.
\end{remark}

\begin{remark}
Note that as the matrix $\mathcal{A}_K$ in line 5 of the algorithm is computed with respect to the measure $\mu$, we do not need to estimate it and invert it at every time step, which  is required in traditional regression Monte Carlo methods. Rather, we can precompute it before starting the backward procedure saving computational time and improving the quality of estimations.
\end{remark}
\begin{remark}
Notice that the random coefficients $\hat{\pmb\alpha}^{n+1}$ are independent from $\{\tilde{X}^m_n\}_{m=1}^M$ and also from the law of $(X_{n+1}|X_n=\tilde{X}^m_n, u_n = u)$. Therefore we can compute the conditional expectation in \eqref{eq:v_iter}, exploiting linearity, as 
\[
\mathbb{E}\big[\hat{\Pi}^{n+1}_K\hat{V}(n+1,\cdot)|X_n=x,u_n=u\big] = \sum_{k=1}^K \hat{\alpha}^{n+1}_k\hat{\phi}^n_k (x,u).
\]
This decomposition enables our approach for optimal control of Markov processes.
\end{remark}
\begin{remark}
Both the value function $\hat V(n,x)$ and the regression coefficients $\hat{\pmb\alpha}^{n+1}$ depend implicitely on all the training points used at times $n+1, \ldots, N$, i.e., on $\tcX_{N-n}$. This dependence will be omitted in notation and only indicated in the proof by applying appropriate projection operators $\Pi_K^{n+1}$ and $\hat \Pi_K^{n+1}$.
\end{remark}

The following theorem offers an upper bound for the error between the estimated and the true value function; the proof is deferred until Section \ref{proofs}.

\begin{theorem}
Under Assumptions \ref{a:density}-\ref{a:semicont_hat_phi}  for all $n = 0, 1, \ldots, N$
\label{th:backward}
\[
\Big\|\hat{V}(n,\tilde{X})-V(n,\tilde{X}) \Big\|_{\ltwoeNn}\le  \bar{R}\frac{\bar{R}^{N-n}-1}{\bar{R}-1}\Big(\epsilon_K+\frac{\sqrt{K}}{\sqrt{M}}\Gamma\|\mathcal{A}_K^{-1/2}\|_{2}\max\limits_{n=1,\,\dots,\,N}\|\phi_k\|_{\ltwomu}\Big) ,
\]
where $\epsilon_K=\max_{n=1, \ldots, N} \Big\|\Pi^n_{K}V(n,\tilde{X})-V(n,\tilde{X}) \Big\|_{\ltwomu}$.
\end{theorem}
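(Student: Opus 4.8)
The plan is to argue by backward induction on $n$, from the base case $n=N$, where $\hat V(N,\cdot)=V(N,\cdot)=g$ gives zero error (and the claimed bound vanishes since $\bar R^{0}-1=0$), down to $n=0$. Writing $e_n := \|\hat V(n,\tilde X)-V(n,\tilde X)\|_{\ltwoeNn}$, the goal is to establish the one-step recursion
\[
e_n \le \bar R\,(e_{n+1} + c), \qquad c := \epsilon_K + \frac{\sqrt K}{\sqrt M}\,\Gamma\,\|\mathcal A_K^{-1/2}\|_2 \max_k\|\phi_k\|_{\ltwomu},
\]
and then to unroll it against $e_N=0$, which yields $e_n \le c\sum_{j=1}^{N-n}\bar R^{j} = \bar R\frac{\bar R^{N-n}-1}{\bar R-1}\,c$, exactly the statement.

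For the inductive step I would first note that, since the true value function obeys $|V(n,\cdot)|\le\Gamma$ (Remark \ref{r:normVa}), truncation to $[-\Gamma,\Gamma]$ is a $1$-Lipschitz map fixing $V(n,\cdot)$, so it only decreases the error and may be dropped. Using the elementary bound $|\sup_u A(u)-\sup_u B(u)|\le \sup_u|A(u)-B(u)|$, with $A(u)=f(n,x,u)+\mathbb E[\hat\Pi^{n+1}_K\hat V(n+1,\cdot)\mid X_n=x,u_n=u]$ and $B(u)$ the analogous expression built from $V(n+1,\cdot)$ (the running reward $f$ cancels), I reduce the pointwise error to $\sup_u|\mathbb E[h\mid X_n=x,u_n=u]|$ where $h:=\hat\Pi^{n+1}_K\hat V(n+1,\cdot)-V(n+1,\cdot)\in\ltwoeNn$. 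Bounding $|\mathbb E[h\mid\cdot]|\le\mathbb E[|h|\mid\cdot]$ and applying Lemma \ref{l:traj} to $|h|$ then extracts the factor $\bar R$, giving $e_n\le\bar R\,\|h\|_{\ltwoeNn}$.

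It remains to control $\|h\|_{\ltwoeNn}$ by $c+e_{n+1}$ through the telescoping decomposition
\[
h = \big(\hat\Pi^{n+1}_K\hat V(n+1)-\Pi^{n+1}_K\hat V(n+1)\big) + \Pi^{n+1}_K\big(\hat V(n+1)-V(n+1)\big) + \big(\Pi^{n+1}_K V(n+1)-V(n+1)\big).
\]
The third term is deterministic and bounded by $\epsilon_K$ by the definition of $\epsilon_K$. The second term is handled by contractivity of the exact projection (Remark \ref{l:proj_bound}), which bounds it by $\|\hat V(n+1)-V(n+1)\|_{\ltwoeNnp}=e_{n+1}$. For the first, genuinely stochastic term I would condition on the earlier layers $\tcX_{N-n-1}$, apply Lemma \ref{l:proj_error} pointwise and then Lemma \ref{l:STDEV}; here the truncation is essential, since it guarantees $\|\hat V(n+1)\|_\infty\le\Gamma$, which is precisely the boundedness hypothesis Lemma \ref{l:STDEV} needs to deliver the $\frac{\sqrt K}{\sqrt M}\Gamma\|\mathcal A_K^{-1/2}\|_2\max_k\|\phi_k\|_{\ltwomu}$ bound uniformly in $\tcX_{N-n-1}$. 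Summing the three contributions gives $\|h\|_{\ltwoeNn}\le c+e_{n+1}$, closing the recursion.

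The main obstacle I anticipate is bookkeeping rather than a single hard estimate: one must track carefully which space $L^2_{e,\cdot}$ each object inhabits (the exact operator $\Pi^{n+1}_K$ preserves the number of training-point layers, whereas the random operator $\hat\Pi^{n+1}_K$ adds one), and justify passing from the pointwise-in-$\tcX_{N-n-1}$ statement of Lemma \ref{l:proj_error} to the full $\ltwoeNn$ norm via the tower property. The one genuinely substantive point is recognising that the truncation step is not cosmetic: it is what keeps $\|\hat V(n+1)\|_\infty$ controllable, and hence what prevents the Monte Carlo standard-deviation term from degrading across the induction.
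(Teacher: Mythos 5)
Your proposal is correct and follows essentially the same route as the paper's proof: the same truncation-dropping step, the same reduction via comparison of suprema and Lemma \ref{l:traj} to $\bar R\,\|\hat\Pi^{n+1}_K\hat V(n+1)-V(n+1)\|$, the same three-term decomposition handled by Lemma \ref{l:proj_error} with Lemma \ref{l:STDEV}, Remark \ref{l:proj_bound}, and the definition of $\epsilon_K$, and the same recursion unrolled from $e_N=0$. The only cosmetic difference is that you invoke $|\sup_u A(u)-\sup_u B(u)|\le\sup_u|A(u)-B(u)|$ directly, whereas the paper proves this inequality inline by the two-sided substitution of $\hat u_n$ and $u^*_n$.
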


\begin{remark}
\label{r:back_conv}
The explicit dependence of the error bound on the number of basis functions $K$ and the accuracy $\epsilon_K$ with which they can approximate the true value function allows for derivation of the tradeoff between the number of basis functions and the number of Monte Carlo iterations.
\end{remark}

\begin{corollary}
\label{c:backward}
If, in addition, the family $\{\phi_k\}_{k=1}^K$ is orthonormal, then $\mathcal{A}_K=Id$ and  the following bound holds:
\[
\Big\|\hat{V}(n,\tilde{X})-V(n,\tilde{X}) \Big\|_{\ltwoeNn}\le \bar{R}\frac{\bar{R}^{N-n}-1}{\bar{R}-1}\Big(\epsilon_K+\frac{\sqrt{K}}{\sqrt{M}}\Gamma\Big)
\]
\end{corollary}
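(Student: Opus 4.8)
The plan is to obtain this corollary as a direct specialisation of Theorem \ref{th:backward}, by evaluating the two basis-dependent quantities appearing in its error bound under the added orthonormality hypothesis. Since I am allowed to assume Theorem \ref{th:backward}, no new induction is needed; the work is entirely in simplifying the constant that multiplies the statistical error term.

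First I would record what orthonormality of $\{\phi_k\}_{k=1}^K$ in $\ltwomu$ buys us. By definition it means $\langle\phi_i,\phi_j\rangle_{\ltwomu}=\delta_{ij}$, so the expectation representation of the Gram matrix in \eqref{eq:scalar} gives $\mathcal{A}_K=\mathbb{E}_{\tilde X\sim\mu}[\pmb\phi(\tilde X)\pmb\phi(\tilde X)^T]=Id$. Consequently $\mathcal{A}_K^{-1/2}=Id$ and its operator norm is $\|\mathcal{A}_K^{-1/2}\|_2=1$. Orthonormality also forces $\|\phi_k\|_{\ltwomu}=1$ for each $k$, hence $\max_k\|\phi_k\|_{\ltwomu}=1$. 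Substituting these two values into the bound of Theorem \ref{th:backward}, the product $\|\mathcal{A}_K^{-1/2}\|_2\max_k\|\phi_k\|_{\ltwomu}$ collapses to $1$ and the statistical term reduces to $\frac{\sqrt K}{\sqrt M}\Gamma$, which is exactly the claimed inequality.

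As an alternative one could re-run the backward induction underlying Theorem \ref{th:backward} from scratch: at each time step the projection-error estimate of Lemma \ref{l:proj_error} simplifies because $\|\mathcal{A}_K^{-1/2}\|_2=1$, and the standard-deviation estimate of Lemma \ref{l:STDEV} simplifies because each $\|\phi_k\|_{\ltwomu}=1$, leaving $\frac{\sqrt K}{\sqrt M}\Gamma$ as the per-step statistical contribution; unrolling the same geometric recursion reproduces the bound. This route is, however, strictly more laborious than invoking the theorem directly, so I would present the substitution argument. There is essentially no obstacle here: the only point requiring a moment's care is the identification $\mathcal{A}_K=Id$, which is immediate from \eqref{eq:scalar}, while everything else is a one-line substitution of the constants $1$ and $1$.
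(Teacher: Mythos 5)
Your proposal is correct and follows exactly the paper's own argument: orthonormality gives $\mathcal{A}_K=Id$ (hence $\|\mathcal{A}_K^{-1/2}\|_2=1$) and $\|\phi_k\|_{\ltwomu}=1$ for every $k$, and substituting these constants into the bound of Theorem~\ref{th:backward} yields the claim. The alternative route you sketch (re-running the induction) is unnecessary, as you yourself note, and the paper does not take it either.
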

\begin{proof}
Since the basis functions are normalised, $\|\phi_k\|_{\ltwomu}=1$ for all $k=1,\,\dots,\,K$. Ortogonality implies that all off-diagonal entries in $\mathcal{A}_K$ are zero. Hence $\mathcal{A}_K = Id$ and $\mathcal{A}_K^{-1/2}=Id$. Plugging these estimates in the bound obtained in Theorem \ref{th:backward} leads to the statement of the Corollary.
\end{proof}

%

\subsection{Forward Evaluation}
\label{forwardeval}

Note that the value iteration procedure described above provides not only an approximation of the value function but also an approximation of the optimal policy; in order to find the control at time $n$ and in state $x$, it is sufficient to solve the optimization problem in the last line of \eqref{eq:v_iter}. In practical applications, it is often the control policy not only the value that is of interest. In this section, we therefore assess the value (performance) of the estimated policy. The only output of the backward procedure we use is the matrix of projection coefficients $\{\hat{\alpha}_k^n\}_{n,k=1}^{N,K}$ which we employ  in a \emph{forward scheme} to take decisions. Recall that those projection coefficients are functions of $\tcX_n$, but this dependence is supressed below for the sake of clarity of notation. For a matrix $\{\xi^m_n\}_{m,n=1}^{M', N}$ of {i.i.d.} $U(0,1)$ variables, and a fixed $x$, we perform a Monte Carlo simulation as follows:
\begin{equation}
\label{eq:eval}
\begin{cases}
X^m_0&=x,\\
X_{n+1}^m&=\varphi(n,X_n^m,\xi^m_n,\nu_n^m),\qquad m = 1, \ldots, M', \ n=0, \ldots, N-1,\\
v(x)&=\frac{1}{M'}\sum_{m=1}^{M'}\Bigl(\sum_{n=1}^N f(n,X^m_n,\nu^m_n)+g(X^m_N) \Bigr),
\end{cases}
\end{equation}
where the estimated optimal control $\nu^m_n$ is computed as:
\begin{equation}
\label{eq:controlpo}
\nu^m_n=\argmax_{u \in\domU}\Bigl\{ f(n, X_n^m,u)+\sum_{k=1}^K\hat{\alpha}^{n+1}_k\hat{\phi}^n_k(X^m_{n}, u)\Bigr\}.
\end{equation}
 In the following we use the notation ``Evaluate the policy'' to refer to the routine specified by equations \eqref{eq:eval}-\eqref{eq:controlpo}.
 
The above Monte Carlo evaluation of the policy approximates $\tilde V(0, x)$, where the valuation function $\tilde V$ is defined as follows:
\begin{equation}\label{eqn:valuation_fun}
\begin{cases}
&\tilde{V}(N,x)=g(x)\\ 
&\tilde{V}(n,x)=f(n,x,\hat{u}_n(x))+\mathbb{E}\big[\tilde{V}(n+1,X_{n+1})\big|X_n=x,u_n=\hat{u}_n(x)\big],
\end{cases}
\end{equation}
where
\[
\hat u_n (x) = \argmax_{u \in\domU}\Bigl\{ f(n, x,u)+\sum_{k=1}^K\hat{\alpha}^{n+1}_k\hat{\phi}^n_k(x, u)\Bigr\}.
\]
Indeed, it is easy to see that the above defined valuation function has a representation
\[
\tilde V(n, x) = \mathbb{E} \left[ \sum_{t=n}^N f\big(t, X_t, \hat u_t(X_t)\big) + g(X_N) \right],
\]
where
\[
\begin{cases}
X_n&=x,\\
X_{t+1}&=\varphi(n,X_t,\xi_t, \hat u_t(X_t)),\qquad t=n, \ldots, N-1.
\end{cases}
\]

Using \eqref{eqn:valuation_fun} has advantages for proving convergence over the above forward running representation or its Monte Carlo estimate \eqref{eq:eval}-\eqref{eq:controlpo}. 

\begin{theorem}
\label{th:forward}
Under Assumptions \ref{a:density}-\ref{a:semicont_hat_phi}, for $n = 0, 1, \ldots, N$,
\[
\begin{split}
\Big\| \tilde{V}(n,\tilde{X})-V(n,\tilde{X})\Big\|_{\ltwoeNn}&\le
\frac{2\bar{R}}{(\bar{R}-1)^2}\Big((N-n)\bar{R}^{N-n+1}-(N-n+1)\bar{R}^{N-n}+1\Big)\\
&\qquad\times \Big(\epsilon_K+\frac{\sqrt{K}}{\sqrt{M}}\Gamma \big\|  \mathcal{A}_K^{-1/2}\big\|_{2} \max\limits_{k=1,\,\dots,\,K}\|\phi_k\big\|_{\ltwomu}\Big).
\end{split}
\]
where $\epsilon_K=\max_{n=1, \ldots, N} \Big\|\Pi^n_{K}V(n,\tilde{X})-V(n,\tilde{X}) \Big\|_{\ltwomu}$.
\end{theorem}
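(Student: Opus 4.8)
The plan is to use that $\tilde V$ is the value of the admissible feedback policy $\hat u = (\hat u_n)$, so that $\tilde V(n,x)\le V(n,x)$ pointwise; consequently $|V-\tilde V| = V-\tilde V\ge 0$ and only a one-sided estimate is needed. I would then set up a backward recursion for $e_n := \big\|V(n,\tilde X)-\tilde V(n,\tilde X)\big\|_{\ltwoeNn}$, starting from $e_N=0$ since $\tilde V(N,\cdot)=g=V(N,\cdot)$, and solve it in closed form.

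The heart of the argument is a one-step decomposition. Writing $Q(n,x,u)=f(n,x,u)+\mathbb{E}[V(n+1,X_{n+1})\,|\,X_n=x,u_n=u]$ and $\hat Q(n,x,u)=f(n,x,u)+\sum_{k}\hat\alpha^{n+1}_k\hat\phi^n_k(x,u)$, we have $V(n,x)=\sup_u Q(n,x,u)$ while $\hat u_n(x)=\argmax_u\hat Q(n,x,u)$. I would split, with both summands nonnegative,
\[
V(n,x)-\tilde V(n,x)=\Big[\sup_u Q(n,x,u)-Q(n,x,\hat u_n(x))\Big]+\mathbb{E}\big[(V-\tilde V)(n+1,\cdot)\,\big|\,x,\hat u_n(x)\big].
\]
For the propagation term I would use $V-\tilde V\ge 0$ to replace $\hat u_n(x)$ by a supremum over $u$ and then invoke Lemma \ref{l:traj}, obtaining the bound $\bar R\,e_{n+1}$. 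For the suboptimality gap I would use that $\hat u_n$ maximises $\hat Q$: inserting $\pm\hat Q$ at the true optimiser and at $\hat u_n$ shows the gap is at most $2\sup_u|Q(n,x,u)-\hat Q(n,x,u)|=2\sup_u\big|\mathbb{E}[(V-\hat\Pi^{n+1}_K\hat V)(n+1,\cdot)\,|\,x,u]\big|$, because $\sum_k\hat\alpha^{n+1}_k\hat\phi^n_k(x,u)=\mathbb{E}[\hat\Pi^{n+1}_K\hat V(n+1,\cdot)\,|\,x,u]$. Applying Lemma \ref{l:traj} to $|(V-\hat\Pi^{n+1}_K\hat V)(n+1,\cdot)|$ then yields $2\bar R\big\|(V-\hat\Pi^{n+1}_K\hat V)(n+1,\tilde X)\big\|_{\ltwoeNn}$.

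It remains to control the projected continuation error $\big\|V(n+1,\tilde X)-\hat\Pi^{n+1}_K\hat V(n+1,\tilde X)\big\|_{\ltwoeNn}$, which is exactly the quantity estimated inside the proof of Theorem \ref{th:backward}. Adding and subtracting $\Pi^{n+1}_K\hat V$ and $\Pi^{n+1}_K V$ and using Remark \ref{l:proj_bound}, Lemma \ref{l:proj_error} and Lemma \ref{l:STDEV} (with $|\hat V|\le\Gamma$), together with the bound of Theorem \ref{th:backward} at time $n+1$, I would bound it by $(1+c_{n+1})\epsilon=b_n\,\epsilon$, where $c_{n+1}=\bar R\frac{\bar R^{N-n-1}-1}{\bar R-1}$, $b_n=\frac{\bar R^{N-n}-1}{\bar R-1}$, and $\epsilon=\epsilon_K+\frac{\sqrt K}{\sqrt M}\Gamma\,\|\mathcal{A}_K^{-1/2}\|_2\max_k\|\phi_k\|_{\ltwomu}$ is the bracketed error term of the statement. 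Combining the two pieces by the triangle inequality gives the linear recursion $e_n\le \bar R\,e_{n+1}+2\bar R\,b_n\,\epsilon$. Unrolling from $e_N=0$ produces $e_n=2\epsilon\sum_{k=n}^{N-1}\bar R^{k-n+1}b_k$, and summing this geometric–arithmetic series reproduces exactly $\frac{2\bar R}{(\bar R-1)^2}\big((N-n)\bar R^{N-n+1}-(N-n+1)\bar R^{N-n}+1\big)\epsilon$.

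The hard part will be the suboptimality-gap step: one must convert the mismatch between maximising the surrogate $\hat Q$ and the true $Q$ into a supremum estimate on the continuation error, while keeping careful track of the spaces $\ltwoeNn$ and $\ltwoeNnp$. In particular, the one-step conditional expectation introduces no new training layer, whereas $\hat u_n$ (through $\hat\alpha^{n+1}$) depends on the layer at time $n+1$; aligning the recursion therefore requires the norm-preserving embedding $\ltwoeNnp\hookrightarrow\ltwoeNn$ (valid because $\mu$ is a probability measure), so that $\big\|(V-\tilde V)(n+1,\cdot)\big\|_{\ltwoeNn}=e_{n+1}$. The remaining summation of the series is routine but must be carried out carefully to match the stated polynomial in $\bar R$ exactly.
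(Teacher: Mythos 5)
Your proposal is correct, and it reproduces the paper's recursion exactly, but it gets there by a genuinely different one-step decomposition. The paper does not use monotonicity; instead it adds and subtracts $\mathbb{E}_{\hat u_n(x)}\big[\hat\Pi^{n+1}_K\hat V(n+1,X_{n+1})\big]$, recognises one resulting piece as $\doublehat{V}(n,\cdot)-V(n,\cdot)$ --- a quantity already bounded inside the proof of Theorem \ref{th:backward} via the two one-sided control substitutions $u^*_n \leftrightarrow \hat u_n$ --- and splits the remaining piece $\mathbb{E}_{\hat u_n}\big[\tilde V(n+1,\cdot)-\hat\Pi^{n+1}_K\hat V(n+1,\cdot)\big]$ by the triangle inequality into the propagation term and $\big\|V(n+1,\tilde X)-\hat\Pi^{n+1}_K\hat V(n+1,\tilde X)\big\|_{\ltwoeNn}$. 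You instead exploit admissibility of $\hat u$ to get $\tilde V\le V$ pointwise, write $V-\tilde V$ as a nonnegative suboptimality gap $\sup_u Q - Q(\cdot,\hat u_n)$ plus a nonnegative propagation term, and extract the factor $2$ from the classical argmax-swap bound $\sup_u Q - Q(\cdot,\hat u_n)\le 2\sup_u|Q-\hat Q|$; in the paper the same factor $2$ arises from two separate appearances of $\big\|V(n+1,\tilde X)-\hat\Pi^{n+1}_K\hat V(n+1,\tilde X)\big\|_{\ltwoeNn}$ (one inside the $\doublehat{V}$ bound, one from the triangle inequality). After that, both arguments coincide: the projected continuation error is controlled by \eqref{eq:t4} together with Theorem \ref{th:backward} at time $n+1$, giving the recursion $e_n\le \bar R\,e_{n+1}+2\bar R\,\frac{\bar R^{N-n}-1}{\bar R-1}\,\epsilon$, whose solution is the stated bound. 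What your route buys is transparency --- it is the standard approximate-dynamic-programming ``performance difference'' argument, self-contained modulo Theorem \ref{th:backward}, with the sign structure made explicit --- while the paper's route is shorter on the page because it recycles the intermediate displays \eqref{eqn:final_beta} and \eqref{eq:t4} verbatim. A minor point in your favour: your explicit summation $2\epsilon\sum_{k=n}^{N-1}\bar R^{k-n+1}\frac{\bar R^{N-k}-1}{\bar R-1}$ yields the constant $+1$ appearing in the theorem statement, whereas the paper's final display \eqref{eq:forward7} carries a sign typo ($-1$) that your computation implicitly corrects.
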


\begin{remark}\label{rem:forward}
Comparing the estimates from Theorems \ref{th:backward}-\ref{th:forward}, we get
\[
\frac{\Big\| \tilde{V}(n,\tilde{X})-V(n,\tilde{X})\Big\|_{\ltwoeNn}}{\Big\| \hat{V}(n,\tilde{X})-V(n,\tilde{X})\Big\|_{\ltwoeNn}}
=
2(N-n) + \frac{(N-n) (\bar{R}-1) - \bar{R}^{N-n}+1}{(\bar{R}^{N-n} -1)(\bar{R} -1)} \approx 2(N-n).
\]
Therefore, the value function is estimated considerably better than the policy. The performance of the estimated policy deteriorates with the number of periods till the horizon $N$. Understandably, as the non-optimally controlled process drifts away from its optimal trajectory.
\end{remark}

\subsection{Performance iteration}
\label{s:performance}

Our performance iteration algorithm for regression Monte Carlo is inspired by  \citet{Longstaff2001} who provide an alternative to the value iteration method presented in \citet{Tsitsiklis2001} in the framework of optimal stopping problems. Our iterative procedure is based on the dynamic programming equation for the performance measure (instead of that for the value function):
\begin{equation}
\label{eq:lspolicy}
\begin{cases}
J(N,X_N)&=g(X_N)\\
J\big(n,(X_s,  u^*_s)_{s=n+1, \ldots, N}\big)&=f(n, X_n,u^*_n(X_n))+J\big(n+1,(X_s,  u^*_s)_{s=n+1, \ldots, N}\big),
\end{cases}
\end{equation}
where the control is given by
\begin{equation}
\label{eq:control}
u^*_n (x)=\argmax_{u \in\domU}\Bigl\{ f(n, x,u)+\mathbb{E}\bigl[J(n+1,(X_s, u^*_s)_{s=n+1, \ldots, N}) \big|X_n=x,u_n=u \bigr]\Bigr\},
\end{equation}
and, with an abuse of notation as the control is not defined at $N$, $(X_s,  u^*_s)_{s=n+1, \ldots, N}$ denotes the process $(X_s)_{s = n, \ldots, N}$ controlled by the control maps $(u^*_s)_{n, \ldots, N-1}$, i.e., 
\[
X_{s+1} = \varphi(s, X_s, \xi_s, u^*(X_s)), \qquad s = n, \ldots, N-1.
\]

The value function is recovered by conditioning the performance measure on $X_n = x$:
\begin{equation}
\label{eq:policyit}
V(n,x)=\mathbb{E}\Bigl[J\big(n,(X_s, u^*_s)_{s=n, \ldots, N}\big) |X_n=x\Bigr].
\end{equation}
The above conditioning can be viewed as a projection, which will be particularly useful when assessing $V(n, \tilde X)$ with $\tilde X \sim \mu$ as an element of $L^2_\mu$.
Looking at the update rule that characterises the performance iteration approach it can be immediately seen that its main advantage compared to the value iteration is that the error committed in the estimation of the conditional expectation in   \eqref{eq:control} it is not directly propagated to the following time step. We will further discuss this topic in Subsection \ref{s:performancediscussion}.

A direct implementation of equation \eqref{eq:lspolicy} allows to iterate over $J$'s rather than $V$'s but requires the computation of 
\[
J(n,(X_s, u^*_s)_{s=n, \ldots, N})=\sum_{s=n}^{N-1} f(s,X_s,u^*_s(X_s))+g(X_N)
\] 
after $u^*_n$ is established, which numerically means resimulating the path from time $n$ to the terminal time for each time-$n$ training point incurring an additional computational cost. For details see Algorithm \ref{alg:RLMC}.

\begin{algorithm}[tb]
 \caption{Regress-later Monte Carlo algorithm (RLMC) - performance iteration}\label{alg:RLMC}
 \textbf{input:} M, K, $\mu$, $\{\phi\}_{k=1}^K$
 \begin{algorithmic}[1]
\State Pre-compute the inverse of the matrix $\mathcal{A}_K$
 \State Generate i.i.d. training points $\{\tilde{X}_{N}^{m}\}_{m=1}^{M}$ accordingly to the distribution $\mu$.
\State Initialise the performance measure $\hat{J} (N, m) =g(\tilde{X}_{N}^{m}), \quad \forall m$
 \For{n=N-1 to 0}
\State  $\hat\alpha^{n+1}=\mathcal{A}_K^{-1}\frac{1}{M}\sum_{m=1}^{M}\Big[J(n+1,\tilde{X}_{n+1}^{m}) \phi(\tilde{X}_{n+1}^{m})\Big]$
 \State Generate a new layer of i.i.d. training points $\{\tilde{X}_{n}^{m}\}_{m=1}^{M}$ accordingly to the distribution $\mu$.
\State Compute the control mapping $\hat{u}_n (x) =\argmax_{u \in\domU}\big\{ f(j, x,u)+\sum_{k=1}^K\hat\alpha^{n+1}_k\hat{\phi}^n_k(x,u)\big\}$
\For{m=1 to M}
\State Generate $(\xi^m_n, \ldots, \xi^m_{N-1}) \sim U(0,1)$
\State $X_n = \tilde X^m_n$
\For{j=s to N-1}
\State \(X_{s+1}=\varphi\big(s,X_{s},\xi^m_s,\hat{u}_j(X_s^m)\big)\)
\EndFor
\State Set $\hat J(n,m)=\sum_{s=n}^{N-1} f\big(n,X_s, \hat{u}_s (X_s)\big)+g(X_N)$
\EndFor
\EndFor
\end{algorithmic}
\textbf{output:} $\{\hat{\alpha}_{k}^n\}_{n,k=1}^{N,K}$
 \end{algorithm}

In order to assess the convergence of the performance iteration algorithm, we need to extend the notation used in the previous sections. As with each training point $X^m_n$ we need to simulate a controlled path up to time $N$,  the space $\ltwomu$ needs to be replaced with
\[
\ltwoxin = \ltwomu \otimes L^2\big( (0,1)^{N-n}, \lambda^{N-n}\big), \qquad n < N,
\]
where $\lambda$ is the Lebesgue measure on $(0,1)$. The elements of this space will be denoted $(\tilde X, \xi^{n}_{n}, \ldots, \xi^{n}_{N-1})$, where $\xi$'s correspond to uniform random variables driving the dynamics of the controlled Markov process \eqref{eq:Xdynamics}. To streamline notation, we also set $L^2_{\xi, N} = \ltwomu$. The space $L^2_{M,N-n+1} = (\ltwomu)^{M(N-n+1)}$ which collects all the randomness involved in computation of $\hat{\pmb{\alpha}}^n$ in the value iteration case gets a counterpart $\ltwochin$ defined by induction as follows:
\[
L^2_{\chi, N} = \big(\ltwomu \big)^M
\]
as no path is generated at time $N$, and
\[
\ltwochin = \ltwochinp \otimes \big(\ltwoxin \big)^M.
\]
In parallel, we define arguments of functions in $\ltwochin$: $\tcY_{N} = (\tilde X^1_N \times \cdots \times \tilde X_N^M)$ and, for $n < N-1$
\[
\tcY_n = \tcY_{n+1} \times \Big( \tilde X^m_{n}, \xi^{n,m}_{n}, \ldots, \xi^{n,m}_{N-1}\Big)_{m=1}^M.
\]
Finally, we introduce two counterparts of $\ltwoen$. The first one to assess performance of strategies:
\begin{equation}\label{eqn:ltwofn}
\ltwoFn = \ltwochinp \otimes \ltwoxin
\end{equation}
because the output of the algorithm is the control strategy which must be assessed by applying it between time $n$ and $N$ with the initial value $X_n = \tilde X \sim \mu$ and the remaining randomness used to obtain the trajectory until time $N$. 
The second counterpart of $\ltwoen$ is a subspace of $\ltwoFn$ which is used in assessing an estimated value function and regression coefficients $\hat{\pmb{\alpha}}^{n}$:
\[
\ltwofn = \ltwochinp \otimes \ltwomu.
\]

In the value iteration case, the estimated control $\hat u_n$ depends on all the training points at future times, i.e., on $\tcX_{n+1}$. Here, these controls involve  further random variables associated with simulation of the trajectory starting at every training point, which is indicated in $\tcY_{n+1}$. With an abuse of notation, we will write
\begin{equation}
\label{eq:perf_notation1}
(X_s, \hat u_s)_{s=n, \ldots, N}
\end{equation}
to mean the sequence of random variables dependent on $\tcY_{n+1}$ in the following way
\begin{equation}
\label{eq:perf_notation2}
X_{s+1} = \varphi\big(s, X_s, \xi_s, \hat u_s (\tcY_{s+1}; X_s)\big)
\end{equation}
with $X_n$ given and $(\xi_s)_{s=n}^{N-1}$ a sequence of i.i.d $U(0,1)$ random variables independent from $\tcY_{n+1}$. Therefore, $J\big(n, (X_s, \hat u_s)_{s=n, \ldots, N}) \big) \in \ltwoFn$, where $(X_n, \xi_n, \ldots, \xi_{N-1})$ are variables corresponding to the space $\ltwoxin$.
Notice that $J\big(n, (X_s, \hat u_s)_{s=n, \ldots, N}) \big)$ is a pathwise evaluation of the control policy, and depends therefore on $\tcY_{n+1}$ only through $\hat{u}$; this is in contraposition with the value iteration case, where the error propagates in time  also directly through the value function approximation $\hat{V}(n+1,\cdot)$.

We extend the projection operator as follows: for $h (\tcY_{n+1}; x, \xi_n, \ldots, \xi_{N-1}) \in \ltwoFn$ we set
\[
\Pi^{n}_K h (\tcY_{n+1}; x, \xi_n, \ldots, \xi_{N-1}) := \sum_{k=1}^K \alpha^{n}_k \phi_k(x),
\]
where
\[
\ltwochinp \ni \hat{\pmb{\alpha}}^{n} = \hat{\pmb{\alpha}}^{n} (\tcY_{n+1}) = \mathcal{A}_K^{-1} \mathbb{E}_{\xi, n} \big[ h (\tcY_{n+1}; \tilde X, \xi_n, \ldots, \xi_{N-1}) \pmb{\phi} (\tilde{X})\big]
\]
and $\mathbb{E}_{\xi, n}$ is the expectation linked to the space $\ltwoxin$. This is an ortogonal projection in $\ltwoxin$ on the space $lin \big(\phi_1(\tilde X), \ldots, \phi_K(\tilde X) \big)$, where $\tilde X$ is the variable corresponding to the $\ltwomu$ part of $\ltwoxin$.

The Monte Carlo projection operator is defined as follows. For a sequence of i.i.d. random variables $(\tilde X^1_{n}, \ldots, \tilde X^M_{n})$ and i.i.d. $(\xi^{n,m}_j)_{j = n, \ldots, N-1;\ m = 1, \ldots, M} \sim U(0,1)$, we set
\begin{equation}\label{eqn:perf_MCproj}
\hat \Pi^{n}_K h (\tcY_{n+1}; x, \xi_n, \ldots, \xi_{N-1}) := \sum_{k=1}^K \hat \alpha^{n}_k \phi_k(x),
\end{equation}
where
\begin{equation}
\ltwochin \ni \hat{\pmb{\alpha}}^{n} = \hat{\pmb{\alpha}}^{n}(\tcY_{n}) = \mathcal{A}_K^{-1} \frac1M \sum_{m=1}^M  h (\tcY_{n+1}; \tilde X^m_{n}, \xi^{n,m}_n, \ldots, \xi^{n,m}_{N-1}) \pmb{\phi} (\tilde{X}^m_n).
\end{equation}
Notice that $\hat \Pi^{n}_K h (\tcY_{n+1}; \cdot) \in \ltwofnl$.

We introduce now the extension of Lemma \ref{l:proj_error} and \ref{l:proj_bound} for functions living in the  spaces relevant for the performance iteration procedure. Proofs of these lemmas are a straightforward generalisation of those in Section \ref{s:assumptions} and are omitted.

\begin{lemma}[Projection error]
\label{l:proj_error_performance}
For $h\in \ltwoFn$, the error of the random projection operator is bounded as follows:
\begin{multline*}
\Big\|\hat{\Pi}^{N-n}_K h (\tcY_{n+1}; \cdot) -\Pi^{N-n}_K h (\tcY_{n+1}; \cdot) \Big\|_{\ltwofnl}\\
\le \big\|\mathcal{A}_K^{-1/2}\big\|_2\frac{1}{\sqrt{M}} SDev_{\substack{\tilde{X}\sim\mu,\\ \xi_{n},\,\dots,\,\xi_{N-1}\sim \lambda}} \Big(h(\tcY_{n+1}\,;\tilde{X}, \xi_n, \ldots, \xi_{N-1})\pmb{\phi}(\tilde{X})\Big),
\end{multline*}
where
\begin{multline*}
SDev_{\substack{\tilde{X}\sim\mu,\\ \xi_{n},\,\dots,\,\xi_{N-1}\sim \lambda}} \Big(h(\tcY_{n+1}\,;\tilde{X}, \xi_n, \ldots, \xi_{N-1})\pmb{\phi}(\tilde{X})\Big) \\
:=
\left( \sum_{k=1}^K Var_{\substack{\tilde{X}\sim\mu,\\ \xi_{n},\,\dots,\,\xi_{N-1}\sim\lambda}}\Big(h(\tcY_{n+1}\,;\tilde{X}, \xi_n, \ldots, \xi_{N-1})\phi_k(\tilde{X})\Big) \right)^{1/2},
\end{multline*}
and $\| \mathcal{A}_K^{-1/2} \|_2 = \max \{ \|\mathcal{A}_K^{-1/2} x\|_2:\ x \in \mathbb{R}^K, \ \|x\|_2 = 1 \}$ is the matrix operator norm of $\mathcal{A}_K^{-1/2}$.
\end{lemma}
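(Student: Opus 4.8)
The plan is to mirror the proof of Lemma~\ref{l:proj_error} line for line, replacing the single-layer measure $\mu$ on $\ltwomu$ by the product measure underlying $\ltwoxin$ (i.e.\ $\mu$ for the $\tilde X$-component together with Lebesgue measure $\lambda$ for the path-driving variables $\xi_n, \ldots, \xi_{N-1}$). Throughout I fix $\tcY_{n+1}$, so that the exact coefficients $\pmb{\alpha}^{N-n} = \pmb{\alpha}^{N-n}(\tcY_{n+1})$ are deterministic and the only randomness in $\hat{\Pi}^{N-n}_K h - \Pi^{N-n}_K h$ is carried by the fresh layer of i.i.d.\ draws $\big(\tilde X^m_n, \xi^{n,m}_n, \ldots, \xi^{n,m}_{N-1}\big)_{m=1}^M$ and by the $\ltwomu$-argument of $\pmb\phi$. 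Using the symmetry of $\mathcal{A}_K$ exactly as before, the definitions of the two operators give
\[
\hat{\Pi}^{N-n}_K h(\tcY_{n+1};\cdot) - \Pi^{N-n}_K h(\tcY_{n+1};\cdot) = \pmb{\beta}_n^T \mathcal{A}_K^{-1}\pmb{\phi}(\cdot),
\]
where $\pmb{\beta}_n = \tfrac1M \sum_{m=1}^M h(\tcY_{n+1}; \tilde X^m_n, \xi^{n,m}_n, \ldots, \xi^{n,m}_{N-1})\,\pmb\phi(\tilde X^m_n) - \mathbb{E}_{\xi,n}\big[h(\tcY_{n+1}; \tilde X, \xi_n, \ldots, \xi_{N-1})\,\pmb\phi(\tilde X)\big]$.

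Next I would expand the squared $\ltwofnl$-norm, which factorises into the integral over the $\ltwomu$-argument of $\pmb\phi(\cdot)$ and the conditional expectation over the fresh layer given $\tcY_{n+1}$. Since $\pmb\phi$ depends only on the $\mu$-component, the inner integral over the argument reproduces $\mathbb{E}_\mu[\pmb\phi(\cdot)\pmb\phi(\cdot)^T] = \mathcal{A}_K$ just as in Lemma~\ref{l:proj_error}, so the product $\mathcal{A}_K^{-1}\mathcal{A}_K\mathcal{A}_K^{-1}$ collapses to $\mathcal{A}_K^{-1}$ and
\[
\big\|\pmb{\beta}_n^T \mathcal{A}_K^{-1}\pmb\phi(\cdot)\big\|^2_{\ltwofnl} = \mathbb{E}\big[\big\|\mathcal{A}_K^{-1/2}\pmb\beta_n\big\|_2^2 \,\big|\, \tcY_{n+1}\big] \le \big\|\mathcal{A}_K^{-1/2}\big\|_2^2\,\mathbb{E}\big[\|\pmb\beta_n\|_2^2 \,\big|\, \tcY_{n+1}\big].
\]
Finally, because the $M$ draws $(\tilde X^m_n, \xi^{n,m}_n, \ldots, \xi^{n,m}_{N-1})$ are i.i.d.\ samples from the product measure of $\ltwoxin$ and $\mathbb{E}[\pmb\beta_n \mid \tcY_{n+1}] = 0$, the variance-of-a-sample-mean identity gives $\mathbb{E}[\|\pmb\beta_n\|_2^2 \mid \tcY_{n+1}] = \tfrac1M \sum_{k=1}^K Var_{\tilde X \sim \mu,\ \xi_n, \ldots, \xi_{N-1}\sim\lambda}\big(h(\tcY_{n+1}; \tilde X, \xi_n, \ldots, \xi_{N-1})\,\phi_k(\tilde X)\big)$, which equals $\tfrac1M$ times the square of the quantity $SDev(\cdots)$ appearing on the right-hand side; taking square roots closes the estimate.

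The generalisation is genuinely routine, and the only point requiring care is the bookkeeping of which randomness is integrated at which stage. One must check that holding $\tcY_{n+1}$ fixed indeed renders $\pmb{\alpha}^{N-n}$ deterministic and $\pmb\beta_n$ centred, and --- the one substantive observation --- that the presence of the path-driving variables $\xi^{n,m}_j$ inside $h$ does not disturb the identity $\mathbb{E}_\mu[\pmb\phi\pmb\phi^T] = \mathcal{A}_K$. This holds precisely because the basis functions $\pmb\phi$ are functions of the $\ltwomu$-variable $\tilde X$ alone, so that integrating them against $\mu$ is unaffected by the extra $\lambda$-coordinates; this is exactly what makes the argument of Lemma~\ref{l:proj_error} transfer verbatim.
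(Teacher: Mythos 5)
Your proposal is correct and is exactly the argument the paper intends: the paper omits this proof, stating it is ``a straightforward generalisation'' of Lemma~\ref{l:proj_error}, and your line-by-line transfer --- fixing $\tcY_{n+1}$, writing the difference as $\pmb{\beta}_n^T\mathcal{A}_K^{-1}\pmb{\phi}(\cdot)$, collapsing $\mathcal{A}_K^{-1}\mathcal{A}_K\mathcal{A}_K^{-1}$ via $\mathbb{E}_\mu[\pmb{\phi}\pmb{\phi}^T]=\mathcal{A}_K$, and invoking the i.i.d.\ sample-mean variance identity over the product measure $\mu\otimes\lambda^{N-n}$ --- is precisely that generalisation. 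Your closing observation, that the argument transfers because $\pmb{\phi}$ depends only on the $\tilde X$-coordinate, correctly identifies the one point that makes the extension routine.
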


\begin{lemma}[Bound on conditional expectation]
\label{l:traj_performance}
For any $h\in \ltwofn$, we have the following bound on the norm of the conditional expectation
\[
\big\| \sup_{u \in \mathcal{D}^U} \mathbb{E}\big[h (\tcY_{n+1}\,;X_{n+1})\big|X_n=\tilde{X}, u_n = u \big]\big\|_{\ltwofn}
\le\bar{R} \| h\|_{\ltwofn},
\]
where 
\[
\mathbb{E}\big[h (\tcY_{n+1}; X_{n+1})\big|X_n=x, u_n = u \big] = \int_{\mathcal{D}} h (\tcY_{n+1}; y)\; r(n, x, u; y) \mu(dy). 
\]
\end{lemma}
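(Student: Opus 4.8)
The plan is to follow the proof of Lemma~\ref{l:traj} essentially verbatim, the only change being the relabelling of the ambient product spaces. Recall that $\ltwofn = \ltwochinp \otimes \ltwomu$, so an element $h = h(\tcY_{n+1}; \tilde X)$ carries a ``frozen'' argument $\tcY_{n+1}$ ranging over $\ltwochinp$ and a genuine spatial argument $\tilde X$ ranging over $\ltwomu$. Writing $\mathbb{E}_{\chi,n+1}$ for the expectation linked to $\ltwochinp$, I would first expand the squared norm on the left-hand side as the iterated expectation
\[
\Big\| \sup_{u \in \domU} \mathbb{E}\big[h(\tcY_{n+1}; X_{n+1})\big| X_n = \tilde X, u_n = u\big]\Big\|_{\ltwofn}^2
= \mathbb{E}_{\chi,n+1}\, \mathbb{E}_{\tilde X \sim \mu}\Big(\sup_{u \in \domU} \int_{\mathcal D} r(n,\tilde X,u;y)\, h(\tcY_{n+1}; y)\, \mu(dy)\Big)^2.
\]
The conceptual point that makes this identical to Lemma~\ref{l:traj} is that the transition density $r(n,\tilde X,u;y)$ couples only the current state $\tilde X$ and control $u$ to the future value $y$; the argument $\tcY_{n+1}$ is untouched by the conditioning and merely carried through, so the object whose norm we compute again lives in $\ltwofn$ and the outer $\mathbb{E}_{\chi,n+1}$ factors out.

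Next I would dispose of the inner integral exactly as before. By Assumption~\ref{a:density} the kernel $y \mapsto r(n,\tilde X,u;y)$ is a probability density against $\mu$, so Jensen's inequality applied for each fixed $u$, together with the elementary bound $(\sup_u a_u)^2 \le \sup_u a_u^2$, gives
\[
\Big(\sup_{u \in \domU} \int_{\mathcal D} r(n,\tilde X,u;y)\, h(\tcY_{n+1}; y)\, \mu(dy)\Big)^2
\le \sup_{u \in \domU} \int_{\mathcal D} r(n,\tilde X,u;y)\, h^2(\tcY_{n+1}; y)\, \mu(dy).
\]
The uniform bound $r \le \bar R^2$ from Assumption~\ref{a:density} then dominates the right-hand side by $\bar R^2 \int_{\mathcal D} h^2(\tcY_{n+1}; y)\, \mu(dy)$, a quantity free of both $u$ and $\tilde X$; hence the supremum over $\domU$ and the expectation $\mathbb{E}_{\tilde X \sim \mu}$ both act trivially. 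Here upper semi-continuity in $u$ from Assumption~\ref{a:semicont_hat_phi} together with compactness of $\domU$ guarantees the supremum is attained, although the inequality above needs only the bound on squared suprema.

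Finally I would take $\mathbb{E}_{\chi,n+1}$ and recognise that $\mathbb{E}_{\chi,n+1}\int_{\mathcal D} h^2(\tcY_{n+1}; y)\, \mu(dy) = \|h\|_{\ltwofn}^2$ by the product structure of $\ltwofn$, arriving at the bound $\bar R^2 \|h\|_{\ltwofn}^2$; taking square roots yields the claim. I do not expect a genuine obstacle, since the analytic content is already contained in Lemma~\ref{l:traj}. The only step demanding care is the bookkeeping of the tensor decomposition $\ltwofn = \ltwochinp \otimes \ltwomu$ and the verification that the conditional-expectation operator acts solely on the $\ltwomu$-coordinate while leaving the $\tcY_{n+1}$-randomness inert --- precisely the measurability observation underlying the passage from Lemma~\ref{l:traj} to the present statement.
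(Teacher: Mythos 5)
Your proof is correct and is exactly what the paper intends: the paper omits this proof, stating it is a straightforward generalisation of Lemma~\ref{l:traj}, and your argument reproduces that proof (Jensen's inequality against the density $r$, the uniform bound $r\le\bar R^2$, and the observation that the $\tcY_{n+1}$-randomness is inert under the conditioning) with only the bookkeeping of $\ltwofn=\ltwochinp\otimes\ltwomu$ changed.
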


Consider the exact performance of the estimated optimal strategy $\hat u_n$ computed in Algorithm \ref{alg:RLMC}:
\[
\begin{split}
\tilde{V}(n,x)=&\mathbb{E}\Big[  J\big(n,(X_s,\hat{u}_s)_{s=n}^N\big) \Big|X_n=x \Big]\\
=&\mathbb{E}\Big[\sum_{s=n}^N f(s,X_s,\hat{u}_n(X_s))+g(X_N)\Big|X_n=x \Big]\\
=&f(n,x,\hat{u}_n(x))+\mathbb{E}\Big[\tilde{V}(n+1,X_{n+1})\Big|X_n=x ,u_n=\hat{u}_n(x)\Big].
\end{split}
\]
This is an analogous quantity as studied in the previous section concerned with the forward evaluation of a strategy extracted in the value iteration scheme. Note that $\tilde V(n, \cdot) \in \ltwofn$ due to the randomness used in computing the strategy $\hat u_n$.

 


\begin{theorem}
\label{th:performance}
Under Assumptions \ref{a:density}-\ref{a:semicont_hat_phi}, for $n = 0, 1, \ldots, N$,
\[
\Big\|\tilde{V}(n,\tilde{X})-V(n,\tilde{X})\Big\|_{\ltwofn}\le2\bar{R}\frac{(3\bar{R})^{N-n}-1}{3\bar{R}-1}\Big(\epsilon_K+\frac{\sqrt{K}}{\sqrt{M}}\Gamma \big\|  \mathcal{A}_K^{-1/2}\big\|_{2} \max\limits_{k=1,\,\dots,\,K}\|\phi_k\|_{\ltwomu}\Big),
\]
where $\epsilon_K=\max_{n=1, \ldots, N} \Big\|\Pi^n_{K}V(n,\tilde{X})-V(n,\tilde{X}) \Big\|_{\ltwomu}$.
\end{theorem}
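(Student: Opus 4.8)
The plan is to argue by backward induction on $n$, deriving a one-step recursion whose solution is the geometric sum in the statement. Set $e_n := \big\|\tilde V(n,\tilde X) - V(n,\tilde X)\big\|_{\ltwofn}$; the base case $e_N = 0$ holds because $\tilde V(N,\cdot) = V(N,\cdot) = g$, and the right-hand side also vanishes at $n=N$. Abbreviate the true and estimated one-step operators by
\[
Q(n,x,u) := f(n,x,u) + \mathbb{E}\big[V(n+1,X_{n+1})\,\big|\,X_n=x, u_n=u\big], \qquad \hat Q(n,x,u) := f(n,x,u) + \sum_{k=1}^K \hat\alpha^{n+1}_k \hat\phi^n_k(x,u),
\]
so that $V(n,x) = \max_u Q(n,x,u)$ by \eqref{eq:dynamicvalue} and $\hat u_n(x) = \argmax_u \hat Q(n,x,u)$. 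Using the recursive representation of $\tilde V$ stated just above the theorem and adding and subtracting $\mathbb{E}\big[V(n+1,X_{n+1})\,\big|\,X_n=x,u_n=\hat u_n(x)\big]$, I would split
\[
\tilde V(n,x) - V(n,x) = \underbrace{\Big(Q(n,x,\hat u_n(x)) - \max_u Q(n,x,u)\Big)}_{A(n,x)} + \underbrace{\mathbb{E}\big[(\tilde V - V)(n+1,X_{n+1})\,\big|\,X_n=x, u_n=\hat u_n(x)\big]}_{B(n,x)}.
\]

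The term $B$ propagates the future error: by Lemma \ref{l:traj_performance} applied to $|\tilde V-V|(n+1,\cdot)\in\ltwofn$, and since the $\ltwofn$- and $\ltwofnp$-norms of $(\tilde V-V)(n+1,\cdot)$ coincide (the extra training randomness does not enter the integrand), $\|B(n,\cdot)\|_{\ltwofn}\le \bar R\, e_{n+1}$. The term $A$ measures the suboptimality of the estimated control and satisfies $A\le 0$. To bound $|A(n,x)|$, for arbitrary $u\in\domU$ I would telescope through $\hat Q$ and use that $\hat u_n(x)$ maximises $\hat Q(n,x,\cdot)$ (the maximum being attained by Assumption \ref{a:semicont_hat_phi} and compactness of $\domU$), so that the $\hat Q$-increment is nonpositive; taking the supremum over $u$ yields the (crucial, factor-two) estimate
\[
|A(n,x)| \le 2\sup_{u\in\domU}\big|Q(n,x,u) - \hat Q(n,x,u)\big|.
\]
Because $\hat\phi^n_k(x,u)=\mathbb{E}[\phi_k(X_{n+1})\mid X_n=x,u_n=u]$, the running-reward terms in $Q-\hat Q$ cancel and $Q-\hat Q$ is exactly the conditional expectation of $V(n+1,\cdot) - \hat\Pi^{n+1}_K J\big(n+1,(X_s,\hat u_s)_{s=n+1}^N\big)$; a second use of Lemma \ref{l:traj_performance} then gives $\|A(n,\cdot)\|_{\ltwofn}\le 2\bar R\,\big\|V(n+1,\cdot)-\hat\Pi^{n+1}_K J(n+1,\cdot)\big\|_{\ltwofn}$.

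It remains to control $\big\|V(n+1,\cdot)-\hat\Pi^{n+1}_K J(n+1,\cdot)\big\|_{\ltwofn}$, and this is the step I expect to require the most care. The key observation is that, since $\pmb\phi$ depends only on $X_{n+1}$, the tower property yields $\Pi^{n+1}_K J(n+1,\cdot) = \Pi^{n+1}_K \tilde V(n+1,\cdot)$: the exact projection of the single pathwise payoff equals the exact projection of its conditional mean, so the Monte Carlo projection of $J$ is unbiased and only a variance term will survive. I would then split
\[
V(n+1) - \hat\Pi^{n+1}_K J = \big(V(n+1) - \Pi^{n+1}_K V(n+1)\big) + \Pi^{n+1}_K(V - \tilde V)(n+1) + \big(\Pi^{n+1}_K J - \hat\Pi^{n+1}_K J\big).
\]
The first summand is bounded by $\epsilon_K$ (its definition); the second by $e_{n+1}$, using that the exact projection is a contraction (the performance-iteration analogue of Remark \ref{l:proj_bound}); the third by Lemma \ref{l:proj_error_performance} together with a standard-deviation bound as in Lemma \ref{l:STDEV}, which, since $J$ is bounded pathwise by $\Gamma$ (Assumption \ref{a:normV}), contributes $\tfrac{\sqrt K}{\sqrt M}\Gamma\|\mathcal{A}_K^{-1/2}\|_2\max_k\|\phi_k\|_{\ltwomu}$. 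Writing $\delta$ for the per-step error $\epsilon_K+\tfrac{\sqrt K}{\sqrt M}\Gamma\|\mathcal{A}_K^{-1/2}\|_2\max_k\|\phi_k\|_{\ltwomu}$, this gives $\big\|V(n+1,\cdot)-\hat\Pi^{n+1}_K J(n+1,\cdot)\big\|_{\ltwofn}\le \delta + e_{n+1}$.

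Combining the bounds for $A$ and $B$ produces the recursion $e_n \le 2\bar R(\delta+e_{n+1}) + \bar R\,e_{n+1} = 2\bar R\,\delta + 3\bar R\,e_{n+1}$ with $e_N=0$; unrolling it gives $e_n\le 2\bar R\,\delta\sum_{j=0}^{N-n-1}(3\bar R)^j = 2\bar R\frac{(3\bar R)^{N-n}-1}{3\bar R-1}\,\delta$, which is the claim. The two features that distinguish this from Theorem \ref{th:backward} are the factor $2$ in the control-suboptimality bound for $A$ and the additional $\bar R\,e_{n+1}$ from propagating $\tilde V$ through $B$; together they replace the value-iteration multiplier $\bar R$ by $3\bar R$. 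The main obstacle is thus the unbiased decomposition of the pathwise Monte Carlo projection (the identity $\Pi^{n+1}_K J = \Pi^{n+1}_K\tilde V$) and the careful tracking of the nested training/path randomness across the spaces $\ltwofn$ and $\ltwofnp$.
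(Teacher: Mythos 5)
Your proposal is correct and takes essentially the same route as the paper's proof: your $Q/\hat Q$ telescoping with the factor-two suboptimality bound for $A$ is just a repackaging of the paper's two-sided argmax estimates, and your treatment of $\big\|V(n+1,\cdot)-\hat\Pi^{n+1}_K J\big\|_{\ltwofn}$ — the tower-property identity $\Pi^{n+1}_K J=\Pi^{n+1}_K\tilde V$, the projection contraction giving $e_{n+1}$, the $\epsilon_K$ term, and the Monte Carlo projection error via Lemmas \ref{l:proj_error_performance} and \ref{l:STDEV} — matches the paper's four-term decomposition exactly, leading to the same recursion $e_n\le 3\bar R\, e_{n+1}+2\bar R\,\delta$ with $e_N=0$. (You even share the paper's own slight imprecision of bounding the pathwise payoff $J$ by the value-function bound $\Gamma$ rather than by $\bar\Gamma=(N-1)\|f\|_\infty+\|g\|_\infty$, so this is not a gap relative to the paper.)
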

\begin{corollary}
\label{c:performance}
Under the assumption of ortonormality of the basis functions $\phi_1, \ldots, \phi_K$ in $\ltwomu$, we have:
\[
\Big\|\tilde{V}(n,\tilde{X})-V(n,\tilde{X})\Big\|_{\ltwofn}\le2\bar{R}\frac{(3\bar{R})^{N-n}-1}{3\bar{R}-1}\Big(\epsilon_K+\frac{\sqrt{K}}{\sqrt{M}}\Gamma\Big).
\]
\end{corollary}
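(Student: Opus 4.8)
The plan is to prove the bound by backward induction on $n$, deriving a one-step recursion for the error $e_n := \big\|\tilde V(n,\tilde X) - V(n,\tilde X)\big\|_{\ltwofn}$ in the same spirit as Theorems \ref{th:backward} and \ref{th:forward}, but carrying along the extra randomness introduced by the pathwise performance $J$. The base case is trivial, $e_N = 0$, since $\tilde V(N,\cdot) = V(N,\cdot) = g$, which matches the right-hand side at $n=N$. For the inductive step I would introduce the intermediate quantity $\bar V(n,x) := f(n,x,\hat u_n(x)) + \mathbb{E}\big[V(n+1,X_{n+1})\,\big|\,X_n = x, u_n = \hat u_n(x)\big]$, that is, the true one-step value of applying the estimated control $\hat u_n$ against the \emph{true} continuation value $V(n+1,\cdot)$, and split $\tilde V(n,\cdot) - V(n,\cdot) = \big(\tilde V(n,\cdot) - \bar V(n,\cdot)\big) + \big(\bar V(n,\cdot) - V(n,\cdot)\big)$.

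The first summand equals $\mathbb{E}\big[(\tilde V - V)(n+1,X_{n+1})\,\big|\,X_n = \tilde X, u_n = \hat u_n(\tilde X)\big]$; bounding its modulus by the corresponding supremum over $u$ and applying Lemma \ref{l:traj_performance} (at the appropriate level) gives a contribution of $\bar R\,e_{n+1}$. The second summand is where the control enters. Writing $q_n(x,u) = f(n,x,u) + \mathbb{E}[V(n+1,\cdot)\,|\,x,u]$ and $\hat q_n(x,u) = f(n,x,u) + \sum_{k} \hat\alpha^{n+1}_k \hat\phi^n_k(x,u)$, the key is that $\hat u_n$ maximises $\hat q_n$ whereas the true optimiser $u^*_n$ maximises $q_n$; the standard two-sided estimate $0 \le q_n(x,u^*_n) - q_n(x,\hat u_n) \le 2\sup_u \big|q_n(x,u) - \hat q_n(x,u)\big|$ (here using Assumption \ref{a:semicont_hat_phi} to guarantee genuine maximisers) bounds $V(n,\cdot) - \bar V(n,\cdot)$ pointwise by $2\sup_u|q_n - \hat q_n|$. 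Since the $f$ terms cancel, $\sup_u|q_n - \hat q_n| = \sup_u\big|\mathbb{E}[(V(n+1,\cdot) - \hat\Pi^{n+1}_K J(n+1,\cdot))\,|\,x,u]\big|$, and a further application of Lemma \ref{l:traj_performance} turns the $\ltwofn$-norm of the second summand into $2\bar R\,\big\|V(n+1,\cdot) - \hat\Pi^{n+1}_K J(n+1,\cdot)\big\|$.

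It then remains to estimate $\big\|V(n+1,\cdot) - \hat\Pi^{n+1}_K J(n+1,\cdot)\big\|$, for which the decisive observation is that averaging the path randomness out of the pathwise performance returns $\tilde V$, so that the true projection of $J$ and of $\tilde V$ coincide, $\Pi^{n+1}_K J(n+1,\cdot) = \Pi^{n+1}_K \tilde V(n+1,\cdot)$. This lets me telescope into three pieces: the Monte Carlo projection error $\hat\Pi^{n+1}_K J - \Pi^{n+1}_K J$, controlled by Lemma \ref{l:proj_error_performance} together with the standard-deviation bound of Lemma \ref{l:STDEV} (using that $J$ is uniformly bounded by Assumption \ref{a:normV}); the term $\Pi^{n+1}_K(\tilde V - V)(n+1,\cdot)$, bounded by $e_{n+1}$ via the non-expansiveness of the true projection operator (Remark \ref{l:proj_bound}); and the projection error $\Pi^{n+1}_K V(n+1,\cdot) - V(n+1,\cdot)$, bounded by $\epsilon_K$ by definition. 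Setting $\delta := \epsilon_K + \frac{\sqrt K}{\sqrt M}\Gamma\,\|\mathcal{A}_K^{-1/2}\|_2\max_k\|\phi_k\|_{\ltwomu}$, the first and third pieces sum to $\delta$, giving $\big\|V(n+1,\cdot) - \hat\Pi^{n+1}_K J(n+1,\cdot)\big\| \le e_{n+1} + \delta$. Collecting both summands yields the recursion $e_n \le \bar R\,e_{n+1} + 2\bar R(e_{n+1} + \delta) = 3\bar R\,e_{n+1} + 2\bar R\,\delta$, and unrolling from $e_N = 0$ produces $e_n \le 2\bar R\,\delta\sum_{j=0}^{N-n-1}(3\bar R)^j = 2\bar R\,\delta\,\frac{(3\bar R)^{N-n}-1}{3\bar R - 1}$, which is the claimed bound; Corollary \ref{c:performance} then follows by setting $\mathcal{A}_K = Id$ and $\|\phi_k\|_{\ltwomu} = 1$.

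I expect the main obstacle to be the bookkeeping of the $L^2$ spaces rather than any single inequality. The object $J(n+1,\cdot)$ lives in $L^2_{F,n+1}$ (it carries both the training data $\tcY_{n+1}$ and a fresh layer of path noise), whereas $\tilde V(n,\cdot)$, the coefficients $\hat\alpha^{n+1}$ and the projected objects live in the smaller spaces $\ltwofn$ and $\ltwochin$; each application of a projection or of a conditional-expectation operator must therefore be checked to land in the space in which the next norm is taken, and in particular the identity $\Pi^{n+1}_K J = \Pi^{n+1}_K \tilde V$ must be justified by the tower property across these spaces. The remaining twist compared with the value-iteration proof is the doubling in the control-suboptimality step, which is exactly what inflates the geometric base from $\bar R$ to $3\bar R$ and explains why the policy error grows faster than the value error.
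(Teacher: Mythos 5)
Your proposal is correct and takes essentially the same route as the paper: your intermediate quantity $\bar V$ together with the two-sided suboptimality estimate is just a repackaging of the paper's add-and-subtract of $\mathbb{E}_{\hat u_n}\big[\hat{\Pi}^{n+1}_K J\big]$ with optimiser substitution, your identity $\Pi^{n+1}_K J = \Pi^{n+1}_K \tilde V$ is exactly the paper's vanishing term \eqref{eq:p_zero}, and you arrive at the identical recursion $e_n \le 3\bar{R}\,e_{n+1} + 2\bar{R}\big(\epsilon_K + \tfrac{\sqrt{K}}{\sqrt{M}}\Gamma\|\mathcal{A}_K^{-1/2}\|_2\max_k\|\phi_k\|_{\ltwomu}\big)$ of Theorem \ref{th:performance}. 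The paper proves the corollary by simply specialising that theorem to $\mathcal{A}_K = Id$ and $\|\phi_k\|_{\ltwomu} = 1$, which is precisely your final step.
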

\begin{proof}
Analogous to the proof of Corollary \ref{c:backward}.
\end{proof}

\begin{remark}
We decided to use the name \emph{performance iteration} as opposed to \emph{policy iteration}, in order to avoid misunderstandings. Often in regression Monte Carlo literature on optimal stopping the analogue of the above algorithm is called policy iteration even though such a name is used in the more general approximate dynamic programming literature to describe algorithms which iterate over controls rather than over the performance measure.
\end{remark}

\subsection{Value vs. Performance iteration}
\label{s:performancediscussion}

In this section we compare the two iterative approaches presented in Section \ref{s:value} and \ref{s:performance}. First we comment on consequences of our theoretical results and then on our experience from solutions of practical problems.

 \subsubsection{Theoretical convergence}
 At a first glance, observing the error bound for the value and performance iteration algorithms (provided in theorems \ref{th:backward} and \ref{th:performance}) one might be tempted to claim that the former has, at least in general situations, a tighter error bound than the latter. Recall, however, that the quantities estimated by the two algorithms and assessed in the theorems are somewhat different. Value iteration provides an estimation of the value function (and this error is assessed in Theorem \ref{th:backward}), while the performance iteration provides an estimation of the control policy and it is its performance that is estimated in Theorem \ref{th:performance}. When comparing the quality of the estimated policies (which are of interest in most practical applications), we have to turn our attention to Theorem \ref{th:forward} which indeed provides us with error bounds for the performance of the control policy estimated by the value iteration algorithm. In Remark \ref{rem:forward} we have shown that this error is approximately $2N$ times higher than the error of the estimated value function.
Denoting by $\eta_V$ and $\eta_P$ the error bounds presented in Theorem \ref{th:forward} and \ref{th:performance}, a comparison of the error of the performance of the policies estimated by the value iteration and performance iteration algorithms gives (under assumption that $\bar{R}, N \gg 1$):
\begin{equation}
\label{eq:comparison_vp}
\frac{\eta_{V}}{\eta_{P}}\approx \frac{N}{3^{N-1}}.
\end{equation}

Define now $\tilde{\epsilon}_K=\max_{n=1, \ldots, N} \big\|\Pi^n_{K}\tilde{V}(n,\tilde{X})-\tilde{V}(n,\tilde{X}) \big\|_{\ltwomu}$, the projection error for the performance of estimated policy in performance iteration algorithm.
Assume that $\tilde{\epsilon}_K\approx \epsilon_K$, which hold, for example, when $\tilde{V}$ is a good approximate of $V$, then the error bound in Theorem \ref{th:performance} can be shrunk considerably:
 \begin{proposition}
 \label{p:performance}
 Under Assumptions \ref{a:density} and \ref{a:normV}, and further assuming $\tilde{\epsilon}_K\approx \epsilon_K$ we have  for $n = 0, 1, \ldots, N$
 \[
 \Big\|\tilde{V}(n,\tilde{X})-V(n,\tilde{X})\Big\|_{\ltwofn}\le2\bar{R}\frac{\bar{R}^{N-n}-1}{\bar{R}-1}\Big(\epsilon_K+\frac{\sqrt{K}}{\sqrt{M}}\Gamma \big\|  \mathcal{A}_K^{-1/2}\big\|_{2} \max\limits_{k=1,\,\dots,\,K}\|\phi_k\|_{\ltwomu}\Big).
 \]
 \end{proposition}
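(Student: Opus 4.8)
The plan is to re-run the backward induction behind Theorem~\ref{th:performance}, but to measure the one-step loss incurred by the estimated control $\hat u_n$ against the \emph{true} continuation performance $\tilde V(n+1,\cdot)$ rather than against $V(n+1,\cdot)$. Writing $E_n = \big\|\tilde V(n,\tilde X) - V(n,\tilde X)\big\|_{\ltwofn}$ and abbreviating the Monte Carlo term by $\rho_M = \frac{\sqrt K}{\sqrt M}\Gamma\|\mathcal{A}_K^{-1/2}\|_2 \max_k \|\phi_k\|_{\ltwomu}$, the target is a recursion of the form $E_n \le \bar R\, E_{n+1} + 2\bar R\,(\tilde\epsilon_K + \rho_M)$, which carries the per-step factor $\bar R$ instead of the $3\bar R$ of Theorem~\ref{th:performance}. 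Solving it with $E_N = 0$ gives $E_n \le 2\bar R\,\frac{\bar R^{N-n}-1}{\bar R-1}(\tilde\epsilon_K + \rho_M)$, and the hypothesis $\tilde\epsilon_K \approx \epsilon_K$ is then invoked only at the very end to replace $\tilde\epsilon_K$ by $\epsilon_K$ in the stated bound.

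The key device is the auxiliary function
\[
\tilde V^{*}(n,x) := \max_{u\in\domU}\Big\{ f(n,x,u) + \mathbb{E}\big[\tilde V(n+1,X_{n+1})\,\big|\,X_n = x, u_n = u\big]\Big\},
\]
the one-step optimal Bellman update of $\tilde V(n+1,\cdot)$. Setting $\tilde Q(x,u) = f(n,x,u) + \mathbb{E}[\tilde V(n+1,X_{n+1})\,|\,X_n=x,u_n=u]$, the defining relation for $\tilde V$ yields $\tilde V(n,x) = \tilde Q(x,\hat u_n(x))$, so I would split
\[
\tilde V(n,x) - V(n,x) = \big(\tilde V^{*}(n,x) - V(n,x)\big) - \big(\tilde V^{*}(n,x) - \tilde Q(x,\hat u_n(x))\big)
\]
and bound the two pieces separately. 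Both $V(n,\cdot)$ and $\tilde V^{*}(n,\cdot)$ are maxima over $u$ of one-step objectives that differ only in their continuation value; since $|\max_u a(u) - \max_u b(u)| \le \sup_u|a(u)-b(u)|$ and the running-reward terms cancel, the first piece is controlled by $\sup_u \big|\mathbb{E}[(\tilde V - V)(n+1,\cdot)\,|\,\cdot,u]\big|$, whose $\ltwofn$-norm is at most $\bar R\,E_{n+1}$ by Lemma~\ref{l:traj_performance}. This is the only place where the propagated error $E_{n+1}$ enters, which is precisely what eliminates the spurious factor of three.

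The second piece is the one-step suboptimality of $\hat u_n$ measured against $\tilde Q$. Because $\hat u_n$ maximises $\hat Q(x,u) := f(n,x,u) + \sum_k \hat\alpha^{n+1}_k \hat\phi^n_k(x,u) = f(n,x,u) + \mathbb{E}[\hat\Pi^{n+1}_K\tilde V(n+1,\cdot)\,|\,x,u]$, the standard ``argmax swap'' estimate gives $0 \le \tilde V^{*}(n,x) - \tilde Q(x,\hat u_n(x)) \le 2\sup_u|\tilde Q(x,u) - \hat Q(x,u)| = 2\sup_u\big|\mathbb{E}[(\tilde V - \hat\Pi^{n+1}_K\tilde V)(n+1,\cdot)\,|\,x,u]\big|$. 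Crucially this compares $\tilde V$ to its \emph{own} random projection, not to $V$, so after applying Lemma~\ref{l:traj_performance} and splitting $\tilde V - \hat\Pi^{n+1}_K\tilde V = (\tilde V - \Pi^{n+1}_K\tilde V) + (\Pi^{n+1}_K\tilde V - \hat\Pi^{n+1}_K\tilde V)$ it is bounded by $2\bar R(\tilde\epsilon_K + \rho_M)$, using the definition of $\tilde\epsilon_K$ for the first summand and Lemma~\ref{l:proj_error_performance} together with the boundedness of $\tilde V$ (as in Lemma~\ref{l:STDEV}) for the second, with no $E_{n+1}$ contribution. Assembling the two bounds produces the desired recursion. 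The main obstacle is exactly the choice of $\tilde V^{*}$: routing the argument through the true continuation value $\tilde V(n+1,\cdot)$ is what decouples the suboptimality loss (pure approximation plus Monte Carlo error) from the propagated error, so that $E_{n+1}$ is counted once rather than three times.
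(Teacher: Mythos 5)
Your overall strategy is exactly the intended one behind Proposition \ref{p:performance}: measure the one-step suboptimality of $\hat u_n$ against the estimated policy's own continuation value $\tilde V(n+1,\cdot)$ instead of against $V(n+1,\cdot)$, so that the propagated error $E_{n+1}$ enters the recursion once (per-step factor $\bar R$ rather than the $3\bar R$ of Theorem \ref{th:performance}), and invoke $\tilde\epsilon_K\approx\epsilon_K$ only at the very end. The first half of your decomposition, $\big\|\tilde V^*(n,\cdot)-V(n,\cdot)\big\|\le\bar R\,E_{n+1}$ via Lemma \ref{l:traj_performance}, is correct. However, the second half contains a genuine error: you identify the algorithm's objective as $\hat Q(x,u)=f(n,x,u)+\mathbb{E}\big[\hat\Pi^{n+1}_K\tilde V(n+1,\cdot)\,\big|\,x,u\big]$. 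In performance iteration the coefficients $\hat{\pmb{\alpha}}^{n+1}$ are computed by regressing \emph{pathwise} samples of $J\big(n+1,(X_s,\hat u_s)_{s=n+1}^N\big)$, not values of the valuation function $\tilde V(n+1,\cdot)$ --- the latter are conditional expectations the algorithm never has access to (that is the whole point of performance iteration). Hence $\sum_k\hat\alpha^{n+1}_k\hat\phi^n_k(x,u)=\mathbb{E}\big[\hat\Pi^{n+1}_K J(n+1,\ldots)\,\big|\,x,u\big]$, and $\hat\Pi^{n+1}_K J\neq\hat\Pi^{n+1}_K\tilde V$: the two \emph{random} projections differ by the simulation noise along the paths; only the exact projections coincide. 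As written, your argmax-swap is applied to a $\hat Q$ that $\hat u_n$ does not maximise, so the inequality $\hat Q(x,u^*)-\hat Q(x,\hat u_n(x))\le 0$ underlying the swap fails, and your two-term splitting $\tilde V-\hat\Pi^{n+1}_K\tilde V=(\tilde V-\Pi^{n+1}_K\tilde V)+(\Pi^{n+1}_K\tilde V-\hat\Pi^{n+1}_K\tilde V)$ bounds an object that is not the one the algorithm produces.

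The repair is mechanical but needs precisely the idea your shortcut skips. Run the swap with the correct $\hat Q$, i.e.\ with $\hat\Pi^{n+1}_K J$, and split into three pieces,
\[
\tilde V-\hat\Pi^{n+1}_K J
=\big(\tilde V-\Pi^{n+1}_K\tilde V\big)
+\big(\Pi^{n+1}_K\tilde V-\Pi^{n+1}_K J\big)
+\big(\Pi^{n+1}_K J-\hat\Pi^{n+1}_K J\big),
\]
where the middle term vanishes by the tower property of conditional expectations --- this is exactly the computation \eqref{eq:p_zero} in the proof of Theorem \ref{th:performance} --- the first term is at most $\tilde\epsilon_K$ by definition, and the last is at most your $\rho_M$ by Lemmas \ref{l:proj_error_performance} and \ref{l:STDEV} applied to the bounded pathwise performance $J$ (not to $\tilde V$). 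With this correction your recursion $E_n\le\bar R\,E_{n+1}+2\bar R\,(\tilde\epsilon_K+\rho_M)$, and hence the stated bound, goes through.
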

 We omit the proof as it follows easily from the proof of Theorem \ref{th:performance} and \ref{th:backward}.

Notice that under the assumptions of Proposition \ref{p:performance} the ratio in \eqref{eq:comparison_vp} becomes
\begin{equation}
\label{eq:comparison_vp2}
\frac{\eta_{V}}{\eta_{P}}\approx N,
\end{equation}
i.e., the performance of the policy estimated by the value iteration algorithm differs from the optimal value $N$ times more than the performance of the policy estimated by the performance iteration algorithm.


The above results give the comparison of the worst case estimates computed in Theorems \ref{th:backward}-\ref{th:performance}. In the following subsection we share our practical experience with those two algorithms.

 \subsubsection{Practical considerations}

In optimal stopping problems, the analogues of performance and value iterations algorithms have comparable computational complexities. In the context of controlled Markov processes these two algorithms have different computational complexities due to the additional evaluation step employed in performance iteration. Therefore, there is a more delicate decision to be made between using performance iteration or value iteration but with a larger number of training points and/or basis functions.

The main reason to decide to use performance iteration over value iteration is that the latter induces propagation of the projection error, while the former uses an update which does not depend \emph{directly} on the functional form of the estimator of the conditional expectation and, therefore, does not propagate the error.

\paragraph{Projection error.}
In practice the following conditions indicate increased effect of error propagation in the value iteration algorithm compared to the performance iteration algorithm:
\begin{itemize}
\item a value function that cannot be represented accurately using chosen basis functions,
\item a value function that abruptly change shape;
\item a running reward $f$ that is small compared to the conditional expectation term in the dynamic programming equation, and, therefore, is hardly represented in the basis function approximation;
\item basis functions can induce a ``sensible'' policy. 
\end{itemize}
In these situations we can observe a substantial improvement in the performance of the estimated control policy when using performance over value iteration. 
The greater precision is due to the ability of the former to produce regression coefficients that can adapt to small changes in the value function, captured by the evaluation step. Value iteration on the other hand reuses the estimated future conditional expectation to compute the current value function, effectively being blind to small changes in the true value function produced by the distribution on the controlled process at future time step. These small contributions can build up to a considerable error. Numerical evidence of the claims above can be found in Section \ref{sec:numtest}.

\paragraph{Variance.}
In practical runs, we observe fairly stable behaviour of projection coefficients $\hat{\pmb{\alpha}}^n$ across time for value iteration, while the same quantity for performance iteration exhibits substantial variations. This is due to the significantly larger variance of random projection of performance of the whole trajectories of controlled process (i.e., $\sigma(X_{n+1}, \ldots, X_N)$-measurable quantities) compared to the value iteration where one projects only $\sigma(X_{n+1})$-measurable variables. 

 
On the other hand, in the case of value iteration, the grater sensitivity to the approximation of the terminal condition, i.e., the first random projection executed in the algorithm, results in a greater variance of the quality of the estimated policy from run to run. The estimated value function at time $n$ is indeed based on the estimate at time $n+1$ plus the effect of the running profit $f$. For this reason problems where $f$ is small, compared to the terminal condition, are hugely influenced by the first regression approximation at time $N$. 

\medskip
To conclude notice that the performance iteration algorithm \ref{alg:RLMC} does not require any truncation of the estimates of the value function as it is based on the realised performance of a control policy. On the other hand, in the  value iteration algorithm we need to introduce truncation to alleviate effects of large projection errors and prevent further propagation of those. 

Some numerical examples of the claims above are presented in Section \ref{sec:numerics} where we study a toy problem whose characteristics highlight the differences between the two methods.

\subsection{Training measure and choice of basis functions}
\label{s:training_and_basis}
 In this section we analyse different training measures $\mu$ and we guide the reader on the choice of basis functions, indicating their pros and cons.
 
  \subsubsection{Choice of training measure $\mu$}
 As shown by Theorems \ref{th:backward}, \ref{th:forward} and \ref{th:performance} the choice of the training measure $\mu$ is paramount for a quick convergence. The distribution of the training points influences the quality of the estimations mainly through the bound on the transition density $\bar{R}$ and the representation error $\epsilon_K$.
 
 In the following we will present the consequences of choosing a particular measure $\mu$ in two common situations.
 
\paragraph{Uninformed choice.} When no information about the problem is exploited and the state space $\mathcal{D}$ is compact, we can use  a uniform distribution $\mu=\lambda / \lambda(\mathcal{D})$, where $\lambda$ is the Lebesque measure. The consequences are: a less accurate fit of basis functions (see Figure \ref{f:comparison_mu2}) and often an inflated error $\epsilon_K$.
 
\paragraph{Knowledge based.} When previous knowledge about the problem is available, the training measure $\mu$ can be chosen in order to maximise the quality of the control policy. Even though providing theoretical results in this direction is beyond the scope of this paper, we note that concentrated measures $\mu$ can produce very low values of $\epsilon_K$ (see Figure \ref{f:comparison_mu2}) at the cost of high values of $\bar{R}$, which reflects the poor coverage of the state space, i.e., insufficient number of training points in extreme positions to evaluate performance of ``wrong'' controls.
 
\medskip
We present now a practical example  for which we can compute values of $\bar{R}$ and $\epsilon_K$ and show the trade off between exploration and accuracy that arises when choosing the training measure $\mu$.
 Consider a two period model with the dynamics $X_1= (X_0 +u_0+\xi_0) \vee (-5) \wedge 5 $, where $\xi_0\sim\mathcal{N}(0,1)$ and the stochastic control problem
 \[
 V(0,z)=\inf_{u\in\mathcal{U}}\Big\{\mathbb{E}\big[ \frac{u_0^2}{2} +g(X_1)|X_0=z, u_0 = u\big] \Big\},
 \]
 where $g(x)=\min(1,x^2)$. We choose basis functions $\{1,\,x,\,x^2\}$ and a family of measures $\mu_{\sigma}=\mathcal{N}(0,\sigma)$. In this framework the trade off between $\epsilon_3$ and $\bar{R}$ is driven by the parameter $\sigma$, which determines the width of the training distribution. The effect of $\mu$ on the quality of the estimated conditional expectations can be assessed from Figure \ref{f:comparison_mu2} which displays, on the right, the actual values of $\epsilon_3$ and $\bar{R}$ for some choices of $\sigma$. The effect on the control policy, however, is more subtle. Figure \ref{f:comparison_mu2} displays on the left an example of the effect of $\sigma$ on the estimated control.

 \begin{figure}
\centering
\includegraphics[width=0.29\linewidth]{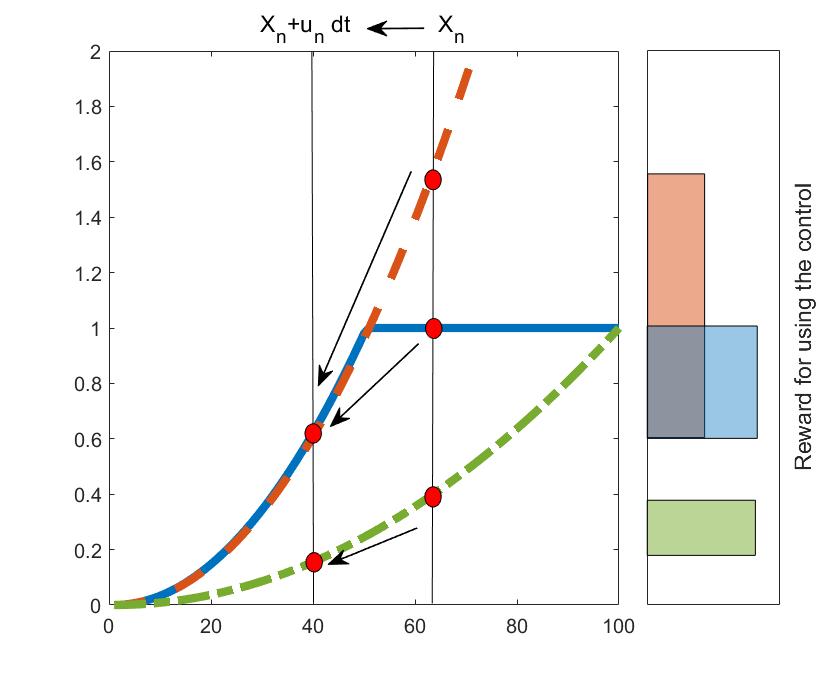}
\hspace{-5pt}\includegraphics[width=0.71\linewidth]{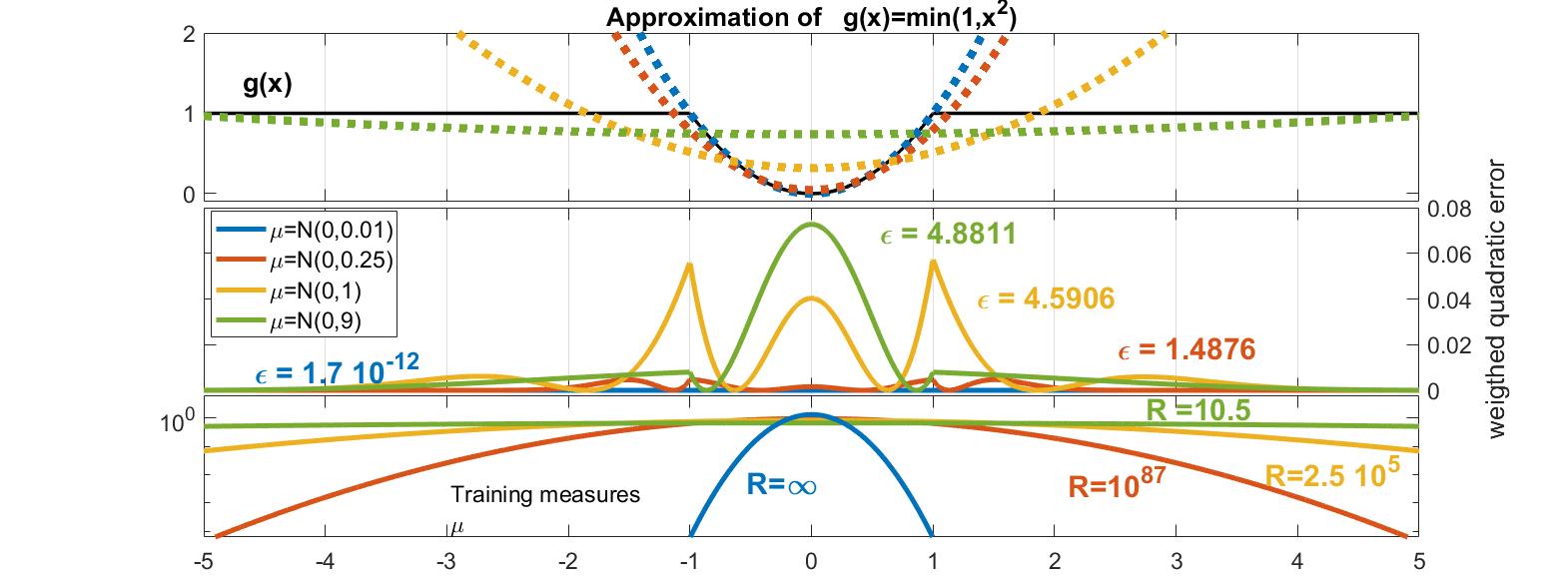}
\caption{In the figure above we show an example of the impact of the measure $\mu$ on $\bar{R}$ and  $\epsilon_3$. Color coding is identical on all graphs and explained in the legend. On the right, the top panel shows four different projections of $g(x)=\min(1,x^2)$ over $\{1,x,x^2\}$ produced by different measures $\mu$. The central panel displays the pointwise quadratic deviation of the projection from $g(x)$ and its integral $\epsilon_3$. In the lower panel we show the logplot of the density function of $\mu$ and corresponding $\bar R$; the value $\infty$ corresponds to the case in which $\bar R$ exceeds the range of numbers available in Matlab. Notice that more concentrated measures correspond to lower $\epsilon_3$ but higher $\bar{R}$, vice versa, higher $\epsilon_K$ and lower $\bar{R}$ are associated with more spread-out measures. We present on the left the same trade-off acting on the estimation of the control policy. The graph displays the true (blue) and estimated (green, orange) expected reward from choosing a control $u_n$. More concentrated measures overestimate the effect of the control and tend to be myopic, achieving smaller terminal payoff but paying a higher cost for control. Measures that allow for exploration are more conservative, tend to underestimate the value of a given action, and therefore the control is used only when it is very needed it.} 
\label{f:comparison_mu2}
\end{figure}

 \subsubsection{Choice of basis functions}
 In this section we discuss the choice of basis functions, which directly affect the precision of the estimates through $\epsilon_K$.
 
One of the most popular choices of basis functions when tackling a general stochastic control problem using regression Monte Carlo algorithms are monomials up to order $\kappa$. We shall show in the following the consequences of such a choice and examine some alternatives.

\paragraph{Monomials.} Notice that an arbitrary number of monomials , i.e. $\phi_k(x)=\prod_{s=1}^d x_s^{j_s}$, $j_s\ge0$ and $x=(x_1,\,\dots,\,x_d)$, are never orthogonal on any domain and under any training measure. As we cannot provide more specific error bounds than those in Theorem \ref{th:backward} and \ref{th:performance} in a general case, we choose a particular example often encountered in practice: we assume the domain $\mathcal{D}=[0,1]$ and the training measure $\mu=\lambda$ (a uniform on $\mathcal{D}$). In this scenario the matrix $\mathcal{A_K}$ is the Hilbert matrix $H_{i,j}=\frac{1}{i+j-2},\,i,j=1,\,\dots,\,K$, while the norm of the basis functions is bounded by $1$. In order to obtain an explicit error bound we have to assess $\| \mathcal{A}^{-1/2}_K \|_{2}$ for which we have an upper bound $1/\sqrt{\theta_K}$, where $\theta_K$ is a lower bound for the smallest eigenvalue of $\mathcal{A}_K$. The actual analytical expression of $\theta_{K}$ is complicated , but the asymptotic behaviour of $\| \mathcal{A}^{-1/2}_K \|_{2}$ is $\mathcal{O}\big( \sqrt{2^{4K^2-3K+1.5}(\pi K)^{4K^2-K}}  \big)=\mathcal{O}\big(K^{2K^2}  \big)$. It is well known that the Hilbert matrix is very ill conditioned and difficult to invert numerically affecting the accuracy of the algorithm. The final comment about monomials is that when the value function is close to a polynomial of a small degree, we have $\epsilon_K \approx 0$ without having to take $K$ large and the this choice of basis functions is useful. 

\paragraph{Orthonormal basis of polynomials.} A straightforward generalisation of the family of monomials  is given by orthogonal polynomial bases of which many examples exists on both compact and unbounded domains. In the literature, orthogonal polynomial bases have been extensively studied in the American option pricing framework and in relation to optimal stopping regression Monte Carlo algorithms, see \citet{Moreno2003} among others. Through our Theorems \ref{th:backward} and \ref{th:performance} and Corollaries \ref{c:backward} and \ref{c:performance} we can assess to which extent it is preferable to chose orthonormal polynomial functions over monomials. Notice also that for low order polynomials it is often possible to compute optimal controls in closed form in terms of the projection coefficients.

\paragraph{Locally affine approximation.} It is sometimes useful, when little is known about the structure of the value function, to exploit the flexibility of local approximations. A very popular choice is to take affine functions  with disjoint supports. Consider a partition $\mathcal{H}_1, \ldots, \mathcal{H}_I$ of the domain $\mathcal{D}$, usually consisting of hypercubes. For each hypercube $\mathcal{H}_i$, we take $d+1$ basis functions: $\mathds{1}_{\{x\in\mathcal{H}_i\}}, x_1\mathds{1}_{\{x\in\mathcal{H}_i\}}, \ldots,\allowbreak x_d \mathds{1}_{\{x\in\mathcal{H}_i\}}$. By construction, basis functions corresponding to different hypercubes are orthogonal and, under the Lebesgue training measure and hypercube partition, it is easy to compute $\mathcal{A}_K$ and its inverse analytically. 
Notice that in practical implementations this choice of basis functions has further advantages as the projection can be performed separately on each set $\mathcal{H}_i$ allowing for parallelisation, which eases the consequences of having a large number of basis functions. It should, however, be remarked that the evaluation of $\mathbb{E}_{n,x,u}[ X_{n+1,j} \mathds{1}_{\{X_{n+1}\in\mathcal{H}_{i}\}}]$ may be time consuming, even in Gaussian models, where multiple evaluations of standard normal CDF are needed.

\paragraph{Radial basis functions.} A less popular, but certainly interesting choice of basis functions when the domain $\mathcal{D}$ is high dimensional, is given by radial basis functions. This class of functions enjoys the property that $\phi_k(x-c_k)=\phi_k(\|x-c_k\|)$, where $c_k$ is called a centre, making them well suited for multidimensional settings. Here we assume that the norm is a weighted Euclidean norm (with a vector of weights $w_k$) corresponding to $\phi$ being constant on elipsoids with centre $c_k$ and decay rates (or bandwidths) $w_k$. Hence, after having identified the most suitable class of radial functions, we need to choose the centres and the weights as well as the number of those for the problem at hand. A common choice is a (truncated) Gaussian family, represented by the kernel functions 
$\phi_k(x)=\frac{\sqrt{w_{k,1} \cdots w_{k,d}}}{(2\pi)^{d/2}} \exp\big(\sum_{i=1}^d w_{k,i} (x_i-c_{k,i})^2\big)$. The truncation is due to a compact domain $\mathcal{D}$ on which the dominating measure $\mu$ is that corresponding to the uniform distribution. Such system of functions cannot be orthogonal, however, recalling that the Gaussian density function is almost zero in the tails, one should expect virtually zero entries in the matrix $\mathcal{A}_K$ apart from neighbouring functions. This will not only simplify the inversion of $\mathcal{A}_K$, but also yield a fairly modest norm $\|\mathcal{A}_K^{-1/2}\|_2$.

\section{Proofs}
\label{proofs}

In the proofs we will use the following shorthand notation $\mathbb{E}_{u_n(x)}[\,\cdot\,]:=\mathbb{E}[\,\cdot\,|X_n=x, u_n=u_n(x)]$.

\subsection{Value Iteration}

\subsubsection{Backward Estimation}

In this subsection we study the backward estimation of the value function $\hat{V}$. For convenience, denote 
\[
\doublehat{V}(n,x)=f(n,x,\hat{u}_n(x))+\mathbb{E}_{\hat{u}_n(x)}\big[\hat{\Pi}_K\hat{V}(n+1,\cdot)\big],
\]
where $\hat u_n (x) = \hat u_n(\tcX_n; x)$ is the estimated optimal policy.

\begin{proof}[Proof of Theorem \ref{th:backward}]
Recall that $\hat{V}(n,\tilde{X})\in\ltwoeNn$ and further notice that
\begin{equation}
\label{eq:t0}
\begin{split}
&\Big\|\hat{V}(n,\tilde{X})-V(n,\tilde{X}) \Big\|_{\ltwoeNn}
=\Big\|\doublehat{V}(n,\tilde{X})\wedge \Gamma \vee (-\Gamma)-V(n,\tilde{X}) \Big\|_{\ltwoeNn}\le\Big\|\doublehat{V}(n,\tilde{X})-V(n,\tilde{X}) \Big\|_{\ltwoeNn},
\end{split}
\end{equation}
where in the last inequality we used that $|V(n,x)| \le \Gamma$ for all $n,x$. Given the definition of $\doublehat{V}$, we have
\[
\begin{split}
\doublehat{V}(n,\tilde{X})-V(n,\tilde{X})
&=f\big(n,\tilde{X},\hat{u}_n(\tilde{X})\big)+\mathbb{E}_{\hat{u}_n(\tilde{X})}\Big[\hat{\Pi}^{n+1}_K \hat{V}(n+1,X_{n+1}) \Big]\\
&\hspace{13pt}-f\big(n,\tilde{X},u^*_n(\tilde{X})\big)-\mathbb{E}_{u^*_{n}(\tilde{X})}\Big[V(n+1,X_{n+1})\Big]\\
&\ge \mathbb{E}_{u^*_n(\tilde{X})}\Big[\hat{\Pi}^{n+1}_K \hat{V}(n+1,X_{n+1}) -V(n+1,X_{n+1})\Big],
\end{split}
\]
where the inequality is given by the substitution of $\hat{u}_n(x)$, which realises the maximum in $\doublehat{V}$, with the true optimal control $u^*_n(x)$.
Similarly replacing  $u^*_n(x)$ by $\hat{u}_n(x)$ we obtain an upper bound
\[
\doublehat{V}(n,\tilde{X})-V(n,\tilde{X})\le  \mathbb{E}_{\hat{u}_n(\tilde{X})}\Big[\hat{\Pi}^{n+1}_K \hat{V}(n+1,X_{n+1}) -V(n+1,X_{n+1})\Big].
\]
Therefore, using Assumption \ref{a:density} and  Lemma \ref{l:traj}  we have the following bound:
\begin{equation}
\label{eq:t1}
\begin{split}
\Big\|\doublehat{V}(n,\tilde{X})-V(n,\tilde{X}) \Big\|_{\ltwoeNn}
&\le \Big\|\sup_{u\in\domU}\mathbb{E}_{n,\tilde X, u}\Big[\hat{\Pi}^{n+1}_K \hat{V}(n+1,X_{n+1}) -V(n+1,X_{n+1})\Big] \Big\|_{\ltwoeNn}\\
&\le \bar{R}\Big\|\hat{\Pi}^{n+1}_K \hat{V}(n+1,\tilde{X}) -V(n+1,\tilde{X}) \Big\|_{\ltwoeNn}.
\end{split}
\end{equation}
We split now the term $\Big\|\hat{\Pi}^{n+1}_K \hat{V}(n+1,\tilde{X}) -V(n+1,\tilde{X}) \Big\|_{\ltwoeNn}$ into three components:
\begin{equation}
\label{eq:t2}
\begin{split}
\Big\|\hat{\Pi}^{n+1}_K \hat{V}(n+1,\tilde{X}) -V(n+1,\tilde{X}) \Big\|_{\ltwoeNn} &\le \Big\|\hat{\Pi}^{n+1}_K \hat{V}(n+1,\tilde{X}) -\Pi^{n+1}_K \hat{V}(n+1,\tilde{X}) \Big\|_{\ltwoeNn}\\
&\hspace{13pt}+\Big\|\Pi^{n+1}_K \hat{V}(n+1,\tilde{X}) -\Pi_K^{n+1} V(n+1,\tilde{X}) \Big\|_{\ltwoeNn}\\
&\hspace{13pt}+\Big\|\Pi_K^{n+1} V(n+1,\tilde{X}) -V(n+1,\tilde{X}) \Big\|_{\ltwoeNn}.
\end{split}
\end{equation}
For the first term in   \eqref{eq:t2} we have, using Lemma \ref{l:proj_error} and \ref{l:STDEV} and the bound $\Gamma$ for $\hat V$:
\begin{equation}
\label{eq:backstd}
\begin{split}
&\Big\| \hat{\Pi}^{n+1}_K\hat{V}(n+1,\tilde X)-\Pi^{n+1}_K\hat{V}(n+1,\tilde X)\Big\|_{\ltwoeNn}\\
&= \Big\|\big\|  \hat{\Pi}^{n+1}_K\hat{V}(n+1,\tilde X)-\Pi^{n+1}_K\hat{V}(n+1,\tilde X) \big\|_{\ltwoe} \Big\|_{L^2_{M,N-n-1}}
\le \frac{\sqrt{K}}{\sqrt{M}}\Gamma \big\|  \mathcal{A}_K^{-1/2}\big\|_{2}  \max\limits_{k=1,\,\dots,\,K}\|\phi_k\big\|_{\ltwomu}.
\end{split}
\end{equation}
The second term in   \eqref{eq:t2} represents the backward propagation of the error and, using Remark \ref{l:proj_bound}, can be used to set up a recursive relation between errors at different time steps:
\[
\begin{split}
\Big\|\Pi^{n+1}_K \hat{V}(n+1,\tilde{X}) -\Pi_K^{n+1} V(n+1,\tilde{X}) \Big\|_{\ltwoeNn}
&=\Big\|\Pi^{n+1}_K \hat{V}(n+1,\tilde{X}) -\Pi_K^{n+1} V(n+1,\tilde{X}) \Big\|_{\ltwoeNnp}\\
&\le\Big\|\hat{V}(n+1,\tilde{X}) -V(n+1,\tilde{X}) \Big\|_{\ltwoeNnp}.
\end{split}
\]
The last term in equation \eqref{eq:t2} is bounded by $\epsilon_K$:
\[
\big\|\Pi_K^{n+1} V(n+1,\tilde{X}) -V(n+1,\tilde{X}) \big\|_{\ltwoeNn} = \big\|\Pi_K^{n+1} V(n+1,\tilde{X})-V(n+1,\tilde{X})\big\|_{\ltwomu} \le\epsilon_K.
\]

Collecting the above estimates we find that
\begin{equation}
\label{eq:t4}
\begin{split}
\Big\|\hat{\Pi}^{n+1}_K \hat{V}(n+1,\tilde{X}) -V(n+1,\tilde{X}) \Big\|_{\ltwoeNn}
&\le\Big\|\hat{V}(n+1,\tilde{X}) -V(n+1,\tilde{X}) \Big\|_{\ltwoeNnp}+\epsilon_K\\
&\qquad+\frac{\sqrt{K}}{\sqrt{M}}\Gamma \big\|  \mathcal{A}_K^{-1/2}\big\|_{2}  \max\limits_{k=1,\,\dots,\,K}\|\phi_k\|_{\ltwomu}.
\end{split}
\end{equation}
Combining this result with   \eqref{eq:t1} and \eqref{eq:t0}, and denoting $\beta_n := \Big\|\hat{V}(n,\tilde{X})-V(n,\tilde{X}) \Big\|_{\ltwoeNn}$, leads to the following recursion:
\begin{equation}\label{eqn:final_beta}
\begin{split}
\beta_n
&\le\bar{R}\beta_{n+1}+\bar{R}\Big(\epsilon_K+\frac{\sqrt{K}}{\sqrt{M}}\Gamma \big\|  \mathcal{A}_K^{-1/2}\big\|_{2}  \max\limits_{k=1,\,\dots,\,K}\|\phi_k\|_{\ltwomu}\Big)\\
&=\bar{R}\sum_{s=0}^{N-n-1}\bar{R}^s\Big(\epsilon_K+\frac{\sqrt{K}}{\sqrt{M}}\Gamma \big\|  \mathcal{A}_K^{-1/2}\big\|_{2}  \max\limits_{k=1,\,\dots,\,K}\|\phi_k\|_{\ltwomu}\Big)\\
&=\bar{R}\frac{\bar{R}^{N-n}-1}{\bar{R}-1}\Big(\epsilon_K+\frac{\sqrt{K}}{\sqrt{M}}\Gamma \big\|  \mathcal{A}_K^{-1/2}\big\|_{2}  \max\limits_{k=1,\,\dots,\,K}\|\phi_k\|_{\ltwomu}\Big),
\end{split}
\end{equation}
where we used that $\hat{V}(N,\cdot)=V(N,\cdot)$, i.e. $\beta_N=0$.
\end{proof}

\subsubsection{Forward Evaluation}

\begin{proof}[Proof of Theorem \ref{th:forward}]
The proof will follow by induction. Notice that $V(N,x)=\tilde{V}(N,x)$ for all $x\in \mathcal{D}$. For $n<N$ we have
\[
\begin{split}
\tilde{V}(n,x)-V(n,x)
&=f(n,x,\hat{u}_n(\tilde{X}))+\mathbb{E}_{\hat{u}_n(x)}[\tilde{V}(n+1,X_{n+1})]-V(n,x)\\
&\hspace{13pt}+\mathbb{E}_{\hat{u}_n(x)}[\hat{\Pi}^{n+1}_K \hat{V}(n+1,X_{n+1})]-\mathbb{E}_{\hat{u}_n(x)}[\hat{\Pi}^{n+1}_K \hat{V}(n+1,X_{n+1})]\\
&=\doublehat{V}(n,x)-V(n,x)+\mathbb{E}_{\hat{u}_n(x)}[\tilde{V}(n+1,X_{n+1})]-\mathbb{E}_{\hat{u}_n(x)}[\hat{\Pi}^{n+1}_K \hat{V}(n+1,X_{n+1})],
\end{split}
\]
where we added and subtracted $\mathbb{E}_{\hat{u}_n(\tilde{X})}\big[ \hat{\Pi}^{n+1}_K \hat{V}(n+1,X_{n+1}) \big]$ and used that $\doublehat{V}(n,x)=f(x,\hat{u}_n(x))+\mathbb{E}_{\hat{u}_n(x)}\big[\hat{\Pi}^{n+1}_K\hat{V}(n+1,X_{n+1})\big]$. Hence,
\begin{equation}
\label{eq:forwardsplit}
\begin{split}
\Big\| \tilde{V}(n,\tilde{X})-V(n,\tilde{X})\Big\|_{\ltwoeNn}
&\le\Big\| \doublehat{V}(n,\tilde{X})-V(n,\tilde{X})\Big\|_{\ltwoeNn}\\
&\qquad+\Big\|\mathbb{E}_{\hat{u}(\tilde{X})}\Big[\tilde{V}(n+1,X_{n+1})-\hat{\Pi}^{n+1}_{K} \hat{V}(n+1,X_{n+1})\Big]\Big\|_{\ltwoeNn}.
\end{split}
\end{equation}

The first term in  \eqref{eq:forwardsplit} has been bounded in the proof of Theorem \ref{th:backward}, Eq. \eqref{eqn:final_beta}, as follows
\begin{equation}
\label{eq:forward1}
\begin{split}
\Big\| \doublehat{V}(n,\tilde{X})-V(n,\tilde{X})\Big\|_{\ltwoeNn}
&\le\bar{R}\Big\|\hat{V}(n+1,\tilde{X}) -V(n+1,\tilde{X}) \Big\|_{\ltwoeNnp}+\bar{R}\epsilon_K\\
&\qquad+\bar{R}\frac{\sqrt{K}}{\sqrt{M}}\Gamma \big\|  \mathcal{A}_K^{-1/2}\big\|_{2}  \max\limits_{k=1,\,\dots,\,K}\|\phi_k\|_{\ltwomu}.
\end{split}
\end{equation}
The second term in   \eqref{eq:forwardsplit} can be bounded making use of Assumption \ref{a:density} and Lemma \ref{l:traj}:
\begin{equation}
\label{eq:forward2}
\begin{split}
&\Big\|\mathbb{E}_{\hat{u}_n(\tilde{X})}\Big[\tilde{V}(n+1,X_{n+1})-\hat{\Pi}^{n+1}_{K} \hat{V}(n+1,X_{n+1})\Big]\Big\|_{\ltwoeNn}\\
&\le\bar{R}\Big\|\tilde{V}(n+1,\tilde{X})-\hat{\Pi}^{n+1}_{K} \hat{V}(n+1,\tilde{X})\Big\|_{\ltwoeNn}\\
&\le\bar{R}\Big\|\tilde{V}(n+1,\tilde{X})-V(n+1,\tilde{X})\Big\|_{\ltwoeNnp}
+\bar{R}\Big\|V(n+1,\tilde{X})-\hat{\Pi}^{n+1}_{K} \hat{V}(n+1,\tilde{X})\Big\|_{\ltwoeNn},
\end{split}
\end{equation}
where the second inequality has been obtained using triangular inequality  with the aim of highlighting the term representing the propagation of the error.
The second term in   \eqref{eq:forward2} has been estimated in \eqref{eq:t4}:
\begin{equation}
\label{eq:forward5}
\begin{split}
\Big\|V(n+1,\tilde{X})-\hat{\Pi}_{K} \hat{V}(n+1,\tilde{X})\Big\|_{\ltwoeNn}
&\le \epsilon_K +\Big\| V(n+1,\tilde{X})-\hat{V}(n+1,\tilde{X})\Big\|_{\ltwoeNnp}\\
&\hspace{13pt} +\frac{\sqrt{K}}{\sqrt{M}}\Gamma \big\|  \mathcal{A}_K^{-1/2}\big\|_{2} \max\limits_{k=1,\,\dots,\,K}\|\phi_k\|_{\ltwomu}.
\end{split}
\end{equation}

Combining estimates  \eqref{eq:forward1}-\eqref{eq:forward5} we obtain 
\begin{equation}
\label{eq:forward6}
\begin{split}
\Big\| \tilde{V}(n,\tilde{X})-V(n,\tilde{X})\Big\|_{\ltwoeNn}
&\le \bar{R} \Big\| \tilde{V}(n+1,\tilde{X})-V(n+1,\tilde{X})\Big\|_{\ltwoeNnp}\\
&\hspace{13pt}+2\bar{R} Z\\
&\hspace{13pt}+2\bar{R}\Big\| V(n+1,\tilde{X})-\hat{V}(n+1,\tilde{X})\Big\|_{\ltwoeNnp},
\end{split}
\end{equation}
where
\[
Z :=\epsilon_K+\frac{\sqrt{K}}{\sqrt{M}}\Gamma \big\|  \mathcal{A}_K^{-1/2}\big\|_{2} \max\limits_{k=1,\,\dots,\,K}\|\phi_k\|_{\ltwomu}.
\]
From Theorem \ref{th:backward} we have
\[
\Big\| V(n+1,\tilde{X})-\hat{V}(n+1,\tilde{X})\Big\|_{\ltwoeNnp}\le \bar{R}\frac{\bar{R}^{N-n-1}-1}{\bar{R}-1} Z.
\]
Denote $\gamma_n:=\Big\| \tilde{V}(n,\tilde{X})-V(n,\tilde{X})\Big\|_{\ltwoeNn}$. From \eqref{eq:forward6} and the above estimate we obtain a recursion for $\gamma_n$:
 \begin{equation}
\label{eq:forward7}
\begin{split}
\gamma_n
&\le \bar{R}\gamma_{n+1}+2\bar{R}Z\Big(\frac{\bar{R}^{N-n}-\bar{R}}{\bar{R}-1}+1 \Big)\\
&=\frac{2\bar{R}}{(\bar{R}-1)^2}Z\Big((N-n)\bar{R}^{N-n+1}-(N-n+1)\bar{R}^{N-n}-1\Big),
\end{split}
\end{equation}
where we used that $\gamma_N = 0$ and $\hat V(N, x) = V(X, x)$. The statement of the theorem follows.
\end{proof}

\subsection{Performance Iteration}
\begin{proof}[Proof of Theorem \ref{th:performance}]
We have
\[
\begin{split}
&\tilde{V}(n,\tilde{X})-V(n,\tilde{X})\\
&=f(n,X_n,\hat{u}_n(\tilde{X}))+\mathbb{E}_{\hat{u}_{n}(\tilde{X})}\Big[\tilde{V}(n+1,X_{n+1})\Big]
-f(n,\tilde{X},u^*_n(\tilde{X}))-\mathbb{E}_{u^*_{n}(\tilde{X})}\Big[V(n+1,X_{n+1})\Big]\\
&\hspace{13pt}+\mathbb{E}_{\hat{u}_{n}(\tilde{X})}\Big[\hat{\Pi}^{n+1}_KJ\big(n+1,(X_s,\hat{u}_s)_{s=n+1}^N\big)\big) \Big]-\mathbb{E}_{\hat{u}_{n}(\tilde{X})}\Big[\hat{\Pi}^{n+1}_KJ\big(n+1,(X_s,\hat{u}_s)_{s=n+1}^N\big)\big) \Big],
\end{split}
\]
where we have added and subtracted $\mathbb{E}_{\hat{u}_{n}(\tilde{X})}\big[\hat{\Pi}^{n+1}_KJ\big(n+1, \ldots )\big]$. Recall that $\hat{\Pi}^{n+1}_K J(n+1,\ldots) \in \ltwofn$. Using the notation
$\mathbb{E}_{n, x, u}\big[\hat{\Pi}^{n+1}_KJ(n+1,\ldots)\big]$ we mean that the coordinate $\tilde X$ corresponding to $\ltwomu$ part of $\ltwofn$ equals $X_{n+1}$.
Since 
\[
\hat{u}_n(x)=\argmax_{u \in \domU} \big\{f(n,x,u)+\mathbb{E}_{n,x,u}\big[\hat{\Pi}^{n+1}_KJ\big(n+1,(X_s,\hat{u}_s)_{s=n+1}^N\big)\big]\big\},
\]
we get a lower bound:
\[
\begin{split}
\tilde{V}(n,\tilde{X})-V(n,\tilde{X})
&\ge \mathbb{E}_{\hat{u}_{n}(\tilde{X})}\Big[\tilde{V}(n+1,X_{n+1})-\hat{\Pi}^{n+1}_K J\big(n+1,(X_s,\hat{u}_s)_{s=n+1}^N\big)\Big]\\
&\qquad+\mathbb{E}_{u^*_{n}(\tilde{X})}\Big[\hat{\Pi}^{n+1}_KJ\big(n+1,(X_s,\hat{u}_s)_{s=n+1}^N\big)-V(n+1,X_{n+1})\Big].
\end{split}
\]
Similarly, since $u^*_n(x)=\argmax_{u \in \domU} \big\{f(n, x,u_n)+\mathbb{E}_{n, x, u}\big[V(n+1,X_{n+1})\big]\big\}$, we have:
\[
\begin{split}
\tilde{V}(n,\tilde{X})-V(n,\tilde{X})
&\le \mathbb{E}_{\hat{u}_{n}(\tilde{X})}\Big[\tilde{V}(n+1,X_{n+1})-\hat{\Pi}^{n+1}_KJ\big(n+1,(X_s,\hat{u}_s)_{s=n+1}^N\big)\Big]\\
&\hspace{12pt}+\mathbb{E}_{\hat{u}_{n}(\tilde{X})}\Big[\hat{\Pi}^{n+1}_KJ\big(n+1,(X_s,\hat{u}_s)_{s=n+1}^N\big)-V(n+1,X_{n+1})\Big].
\end{split}
\]

Collecting the previous two inequalities and using the triangular inequality, we obtain
\[\begin{split}
\Big\|\tilde{V}(n,\tilde{X})-V(n,\tilde{X})\Big\|_{\ltwofn}
&\le \Big\| \mathbb{E}_{\hat{u}_n(\tilde{X})}\big[ \tilde{V}(n+1,X_{n+1})-\hat{\Pi}^{n+1}_KJ\big(n+1,(X_s,\hat{u}_s)_{s=n+1}^N\big)   \big]\Big\|_{\ltwofn}\\
&\hspace{12pt}+\Big\| \sup_{u\in\domU}\Big\{ \mathbb{E}_{n, \tilde X, u}\big[ \hat{\Pi}^{n+1}_KJ\big(n+1,(X_s,\hat{u}_s)_{s=n+1}^N\big) -V(n+1,X_{n+1})\big]\Big\}\Big\|_{\ltwofn}.
\end{split}
\]
Using Assumption \ref{a:density} and Lemma \ref{l:traj_performance}, the above inequality reads 
\begin{equation}
\label{eq:performance2}
\begin{split}
\Big\|\tilde{V}(n,\tilde{X})-V(n,\tilde{X})\Big\|_{\ltwofn}
&\le \bar{R}\Big\|\tilde{V}(n+1,\tilde X)-\hat{\Pi}^{n+1}_KJ\big(n+1,(X_s,\hat{u}_s)_{s=n+1}^N\big) \Big\|_{\ltwofn}\\
&\hspace{12pt}+\bar{R} \Big\|  \hat{\Pi}^{n+1}_KJ\big(n+1,(X_s,\hat{u}_s)_{s=n+1}^N\big) -V(n+1,\tilde X)\Big\|_{\ltwofnp}\\
&\le\bar{R}\Big\|\tilde{V}(n+1,\tilde X)-V(n+1,\tilde X)  \Big\|_{\ltwofnp} \\
&\hspace{12pt}+\bar{R}\Big\|\hat{\Pi}^{n+1}_K J\big(n+1,(X_s,\hat{u}_s)_{s=n+1}^N\big)  - V(n+1,\tilde X) \Big\|_{\ltwofn}\\
&\hspace{12pt}+\bar{R}\Big\|V(n+1,\tilde X)-\hat{\Pi}^{n+1}_K J\big(n+1,(X_s,\hat{u}_s)_{s=n+1}^N\big)  \Big\|_{\ltwofn}\\
&\le\bar{R}\Big\|\tilde{V}(n+1,\tilde X)-V(n+1,\tilde X)  \Big\|_{\ltwofnp} \\
&\hspace{12pt}+2 \bar{R}\Big\|V(n+1,\tilde X)-\hat{\Pi}^{n+1}_K J\big(n+1,(X_s,\hat{u}_s)_{s=n+1}^N\big)  \Big\|_{\ltwofn}.
\end{split}
\end{equation}
The first term in the last bound above represents  the propagation of error from future time steps. For the second we use the triangular inequality in order to split it in a number of error terms:
\begin{equation}
\label{eq:performance3}
\begin{split}
\Big\|\hat{\Pi}^{n+1}_K J\big(n+1,&(X_s,\hat{u}_s)_{s=n+1}^N\big) - V(n+1,X_{n+1}) \Big\|_{\ltwofn}\\
&\le\Big\|\hat{\Pi}^{n+1}_K J\big(n+1,(X_s,\hat{u}_s)_{s=n+1}^N-\Pi^{n+1}_K J\big(n+1,(X_s,\hat{u}_s)_{s=n+1}^N\big)\big) \Big\|_{\ltwofn}\\
&\qquad+ \Big\|\Pi^{n+1}_KJ\big(n+1,(X_s,\hat{u}_s)_{s=n+1}^N\big) -\Pi^{n+1}_K \tilde{V}(n+1,\tilde{X})\Big\|_{\ltwofnp}\\
&\qquad+\Big\|\Pi^{n+1}_K \tilde{V}(n+1,\tilde{X})- \Pi^{n+1}_K V(n+1,\tilde{X}) \Big\|_{\ltwofnp}\\
&\qquad+ \Big\|\Pi^{n+1}_K V(n+1,\tilde{X})-V(n+1,\tilde X) \Big\|_{\ltwomu}.
\end{split}
\end{equation}
The first term above can be bounded using Lemma \ref{l:proj_error_performance} and \ref{l:STDEV}:
\begin{equation}
\label{eq:p_std}
\begin{split}
 \Big\|\Pi^{n+1}_K J\big(n+1,(X_s,\hat{u}_s)_{s=n+1}^N\big)-\hat{\Pi}^{n+1}_KJ\big(n+1,(X_s,\hat{u}_s)_{s=n+1}^N\big) \Big\|_{\ltwofn}
 \le \frac{\sqrt{K}}{\sqrt{M}}\Gamma \big\|  \mathcal{A}_K^{-1/2}\big\|_{2}  \max\limits_{k=1,\,\dots,\,K}\|\phi_k\big\|_{\ltwomu}.
\end{split}
\end{equation}
The second term  in \eqref{eq:performance3} can be computed as follow using Remark \ref{l:proj_bound}:
\begin{equation}
\label{eq:p_zero}
\begin{split}
\Big\|\Pi^{n+1}_K &\tilde{V}(n+1,\tilde{X})-\Pi^{n+1}_K J\big(n+1,(X_s,\hat{u}_s)_{s=n+1}^N\big) \Big\|_{\ltwofnp}\\
&=\Big\|\mathcal{A}_K^{-1}\mathbb{E}_{z\sim\mu}\Big[\tilde{V}(n+1,z)\pmb{\phi}(z)\Big] \pmb{\phi}(\tilde{X})\\
&\quad-\mathcal{A}_K^{-1}\mathbb{E}_{\substack{z\sim\mu,\\ \xi_{n+1},\,\dots,\,\xi_{N-1}\sim\lambda}}\Big[J\big(n+1,(\varphi(s,X_{s},\hat{u}_{s},\xi_{s}),\hat{u}_{s+1})_{s=n+1}^{N-1}|_{X_{n+1}=z}\big) \pmb{\phi}(z)\Big]\pmb{\phi}(\tilde{X}) \Big\|_{\ltwofnp}\\
&=\Big\|\mathcal{A}_K^{-1}\mathbb{E}_{z\sim\mu}\Big[
\Big\{ \tilde{V}(n+1,z)- \mathbb{E}_{\xi_{n+1},\,\dots,\,\xi_{N-1}\sim\lambda}\big[ J\big(n+1,(\varphi(s,X_{s},\hat{u}_{s},\xi_{s}),\hat{u}_{s+1})_{s=n+1}^{N-1}\big)\big|_{X_{n+1}=z}\Big\}\\
&\qquad\times\pmb{\phi}(z)\Big] \pmb{\phi}(\tilde{X}) \Big\|_{\ltwofnp}\\
&=\Big\|\mathcal{A}_K^{-1}\mathbb{E}_{z\sim\mu}\Big[\big(\tilde{V}(n+1,z)- \tilde{V}(n+1,z) \big)\pmb{\phi}(z)\Big]\pmb{\phi}(\tilde{X}) \Big\|_{\ltwofnp}=0,
\end{split}
\end{equation}
where we used the tower property of conditional expectations and the notation introduced in \eqref{eq:perf_notation1}-\eqref{eq:perf_notation2}. 
The third term in \eqref{eq:performance3} can be bounded making use of Remark \ref{l:proj_bound}
\begin{equation}
\label{eq:p_iter}
\Big\|\Pi^{n+1}_K V(n+1,\tilde{X}) - \Pi^{n+1}_K \tilde{V}(n+1,\tilde{X}) \Big\|_{\ltwofnp}\le\Big\|V(n+1,\tilde{X}) - \tilde{V}(n+1,\tilde{X}) \Big\|_{\ltwofnp},
\end{equation}
note that we obtain an additional term representing the error propagated from future time steps.
The last term  in \eqref{eq:performance3} can be bounded by $\epsilon_K$.

Collecting above estimates we obtain
\begin{equation}
\label{eq:performance5}
\begin{split}
\Big\|\tilde{V}(n,\tilde{X})-V(n,\tilde{X})\Big\|_{\ltwofn}
&\le 3\bar{R} \Big\| \tilde{V}(n+1,\tilde{X})-V(n+1,\tilde{X})  \Big\|_{\ltwofn}\\
&\quad+2\bar{R} \epsilon_K +2\bar{R}\frac{\sqrt{K}}{\sqrt{M}}{\Gamma} \big\|  \mathcal{A}_K^{-1/2}\big\|_{2}  \max\limits_{k=1,\,\dots,\,K}\|\phi_k\big\|_{\ltwomu}.
\end{split}
\end{equation}
Let $\beta_n:=\Big\| \tilde{V}(n,\tilde{X})-V(n,\tilde{X})\Big\|_{\ltwofn}$ and $Z:=\epsilon_K+\frac{\sqrt{K}}{\sqrt{M}}\bar{\Gamma}\big\|  \mathcal{A}_K^{-1/2}\big\|_{2} \max\limits_{k=1,\,\dots,\,K}\|\phi_k\big\|_{\ltwomu}$. Inequality \eqref{eq:performance5} provides the following recursion for $\beta_n$:
\[
\begin{split}
\beta_n
&=3\bar{R}\beta_{n+1}+2\bar{R} Z=2\bar{R}\sum_{s=0}^{N-n-1}(3\bar{R})^s Z=2\bar{R}\frac{(3\bar{R})^{N-n}-1}{3\bar{R}-1}Z,
\end{split}
\]
which provides us with the statement of the theorem.
\end{proof}

\section{Numerical Examples}
\label{sec:numerics}

\subsection{LQ1 convergence to analytical solution}
In order to provide evidence of the convergence of the two algorithms we briefly present a linear quadratic problem in one dimension, for which analytical solution is available in continuous time. The dynamics in continuous time is given by
\[
dX_t = (1+X_t+u_t)dt + dW_t,
\]
with the control $(u_t)$ being a real-valued process adapted to the filtration generated by the Brownian motion $(W_t)$.
Discretising the time with time-step $1/N$ yields a process with the dynamics:
\[
X_{n+1}=X_n+\frac{1+X_n+u_n}{N}+\frac{\xi_n}{\sqrt{N}}, \qquad X_0=x_0,\,\,\xi_n\sim\mathcal{N}(0,1)
\]
Define the cost functional $J$
\[
J\big(n,(X_s,u_s)_{s=n}^N\big)=\sum_{s=n}^{N}\frac{X_s^2+u_s^2}{N}+X_{N}^2
\]
and the value function $V(n,x)=\sup\limits_{(u_t)}\Big\{ J\big(n,(X_s,u_s)_{s=n}^{N}\big) \Big\}$.

We choose $N=100$ and solve this problem using the two algorithms presented in Section \ref{s:RLMC}. We compare the value of the estimated policies with the value function of the continuous time problem. The relative error is displayed in Figure \ref{fig:LQ1_error}; notice that we can expect about $1\%$ error coming from the discretisation of time. 

 \begin{figure}
\centering
\includegraphics[width=0.8\linewidth]{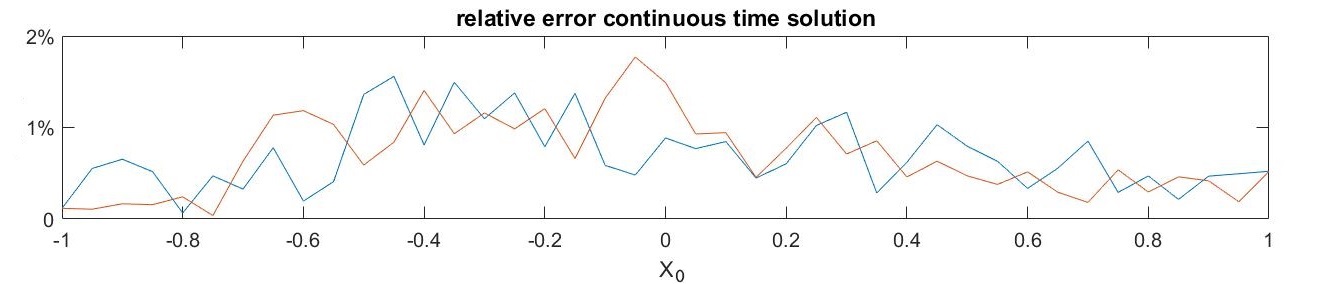}
\caption{In the figure above we show, in relative terms, the distance between the value function of the Linear Quadratic continuous time problem and the value of the policies estimated by the algorithms, value iteration in blue and performance iteration in orange, as a function of the initial condition $x_0$}
\label{fig:LQ1_error}
\end{figure}

\subsection{Control of a particle through doorways}
\label{sec:numtest}

In this experiment we propose a toy problem whose optimal policy is difficult to learn for the algorithm. 
To help intuition, imagine we are controlling a particle through a system of rooms connected via the doors $[d^-_{i},d^+_{i}]$ at times $t_1, \ldots, t_4$.

\begin{figure}
\centering
\includegraphics[width=0.7\linewidth]{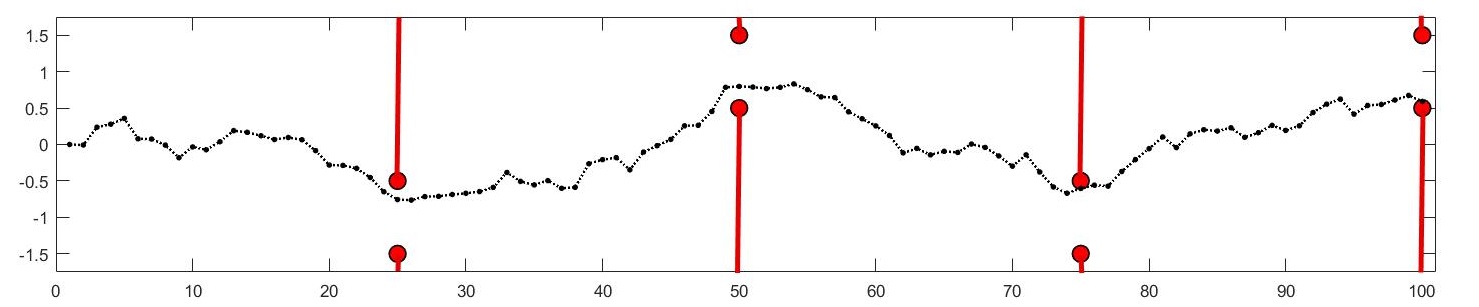}
\caption{Graphical representation of the cost functional \eqref{eq:doors_cost}, which acts as a system of doors through which we want to drive the particle. Superimposed an example of controlled trajectory.} 
\label{f:room_plan}
\end{figure}

Consider a truncated controlled autoregressive process
\[
X_{n+1}=\big(X_{n}+\frac{u_n}{100}+\frac{1}{10}\xi_n\big)\vee-2 \wedge 2 ,\,n=0,\,\dots,\,N,
\]
and the task of guiding the particle through a sequence of doorways, as illustrated in Figure \ref{f:room_plan}, minimising the use of the control. The horizontal axis denotes the time. We postulate the following cost functional which penalises severely for hitting the wall:
\begin{equation}
\label{eq:doors_cost}
J(n,(X_s,u_s)_{s=n}^N)=\sum_{s=n}^N \Big[ bu_s^2+c \sum_{i=1}^4 \mathds{1}_{\{s=t_i\}}\mathds{1}_{\{X_s\notin[d^{-}_{i},d^{+}_{i}]\}} \Big]
\end{equation}
where $b$ represents a quadratic cost for using the control, while $c$ is the penalty for hitting the wall.

For this problem we select the set of basis functions given by $\{1,\,x,\,x^2\}$ and we will test different choices of the training measure $\mu$. We fix $b=1$, $c=100$, $N=100$, and $(t_1, \ldots, t_4)\allowbreak = (25, 50, 75, 100)$.

The purpose of this control problem is to visualise clearly the difference between value and performance iteration. We refer to figure \ref{f:comparison_mu2} to see the effect of the measure $\mu$ on the approximation of a function similar to the shape of each door $\mathds{1}_{\{X_s\notin[d^{-}_{i},d^{+}_{i}]\}}$. We would like to pick the one that induces the best policy, i.e. the one under which the distribution of the controlled process is closest to the one of the optimally controlled process.

\begin{figure}
\centering
\includegraphics[width=1\linewidth]{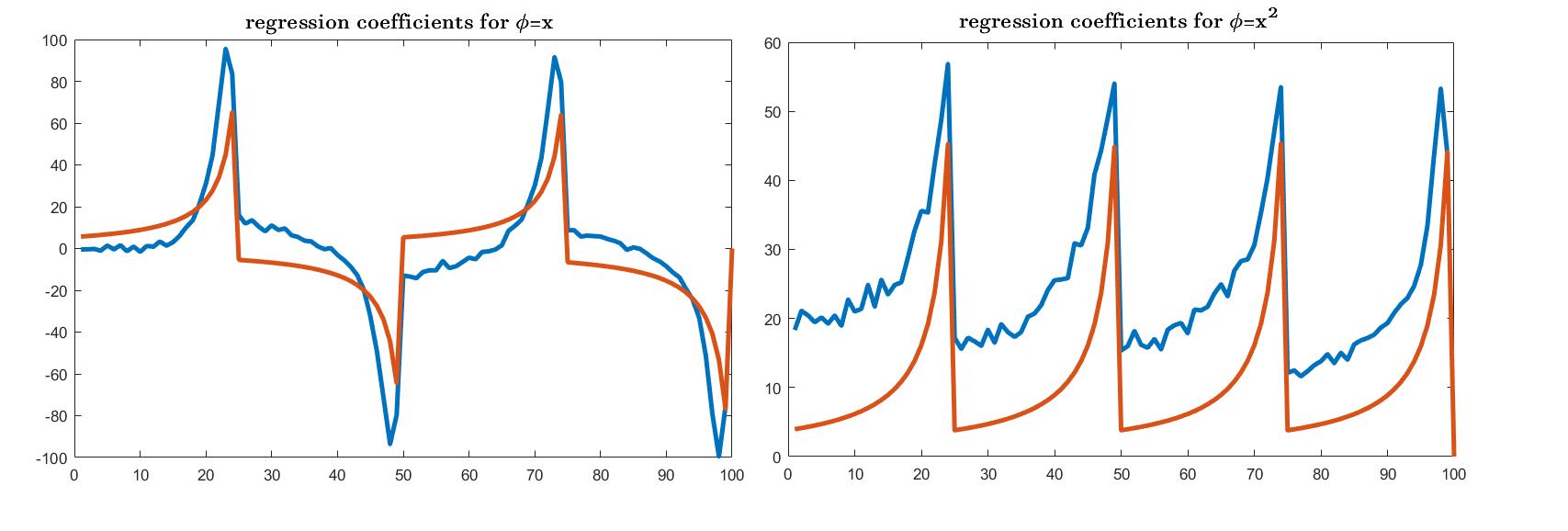}
\includegraphics[width=0.9\linewidth]{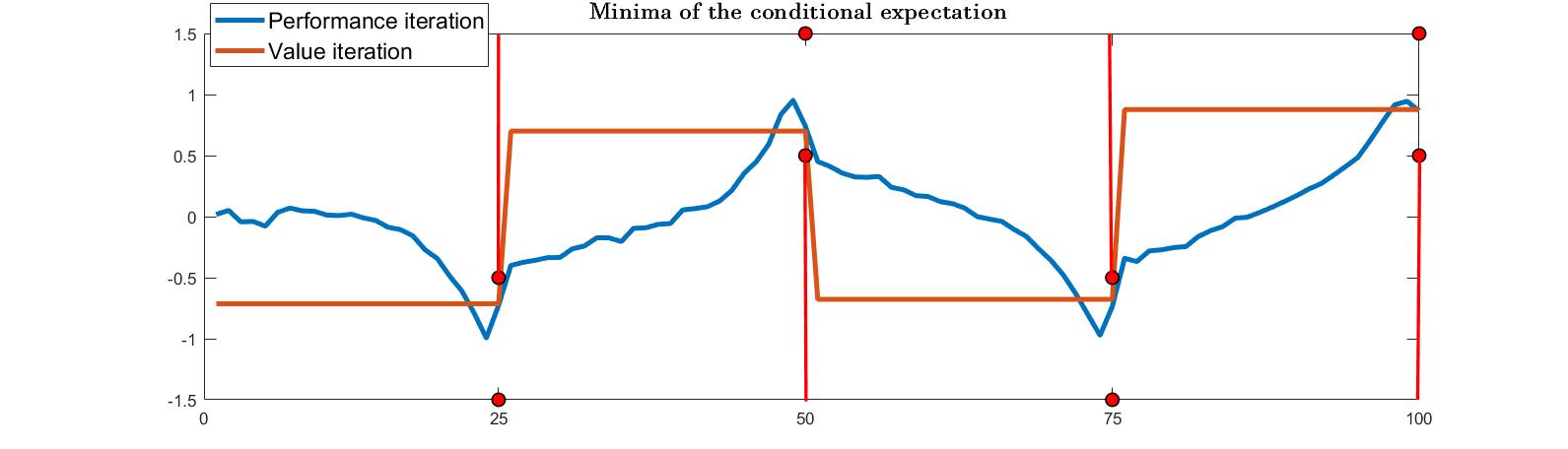}
\caption{The top graphs display regression coefficients for the basis function $x$ and $x^2$ for value and performance iteration. We do not show the coefficient for the constant basis function as it does not play any role in computation of the control. The bottom graph shows the location of the minimum point of the conditional expectation of the estimated continuation value $x \mapsto \hat \alpha_2 \hat \phi_2(x, 0) + \hat \alpha_3 \hat \phi_3(x, 0)$ (recall that the control is additive).} 
\label{f:doors_comparison}
\end{figure}

In order to improve the numerical results, and given the peculiar structure of this problem, we introduce a time dependent training measure $\mu_n$. The intuition is that the measure we choose should help to guide the training points through the rooms, inducing an effective policy. The proofs we presented in Section \ref{proofs} can be adapted to a time dependent training measure $\mu_n$ considering that the only difference is to update the bound $\bar{R}$ with $\bar{R}\max_{n}\big\{\big\|\frac{d\mu_n}{d\mu} \big\|_\infty\big\}$. We use a heuristic technique to generate a training measure $\mu_n$: we first solve the problem using a uniform training measure $\mu$. We then simulate the process using the computed controls and fit a truncated Gaussian distribution at each time $n$. Those distributions are then used as time-dependent training measures $\mu_n$. We iterate this procedure until satisfactory convergence is obtained. For our problem the convergence was quick and required only a few iterations.

\begin{figure}
\centering
\includegraphics[width=1\linewidth]{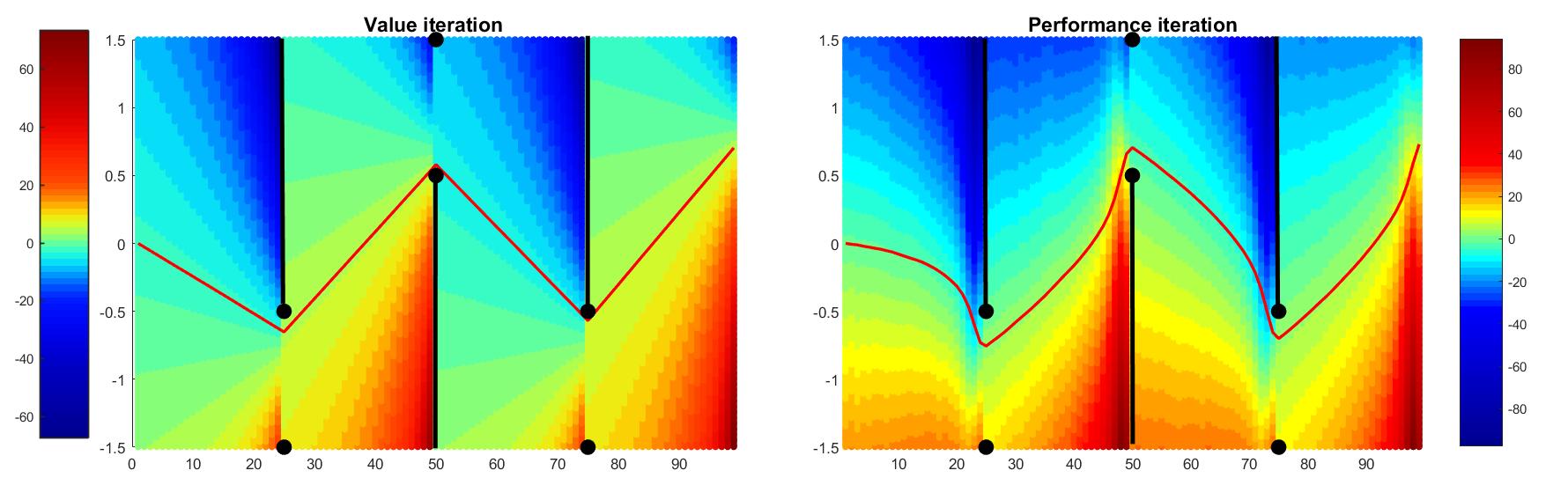}\\
\caption{The two panels display the control policy estimated by the two algorithms; value iteration on the left, performance iteration on the right. Superimposed, in red, the average controlled trajectory. } 
\label{f:doors_comparison_map}
\end{figure}

Our results are displayed in Figures \ref{f:doors_comparison}-\ref{f:performance}. On the top panels in Figure \ref{f:doors_comparison} we notice that the estimates of regression coefficients is smoother for the value iteration scheme than for the performance iteration. This is because of the fact that the conditional expectation of the linear combination of our basis functions together with the cost of applying control (but not the penalty for missing doorways) is exactly representable by the combination of the basis functions. Therefore, in all but 4 times $t_1, \ldots, t_4$, any error is due to Monte Carlo estimation of the coefficients. On the other hand, in the performance iteration scheme not only the variance of coefficient estimates is larger as whole trajectories are generated but also each time the projection is of an estimate of a value function which is not representable by the basis functions due to the presence of doorways and the effect of future controls. However, as displayed in the bottom panel of Figure \ref{f:doors_comparison}, the value iteration scheme is less able to guide the process in the right direction. Indeed, this graph displays the location of the minimum of the mapping $x \mapsto \hat \alpha_2 \hat \phi_2(x, 0) + \hat \alpha_3 \hat \phi_3(x, 0)$, i.e., the point to which control would shift the process if there was no cost involved. The location of the minimum is constant between doorways for the value iteration scheme\footnote{It should be noted that the conditional expectation itself does change over time even though the minimum stays constant, c.f. regression coefficients in the top panels of of Figure \ref{f:doors_comparison}, which explains why the control map in Figure \ref{f:doors_comparison_map} is not piecewise constant.} suggesting insufficient adaptability of the estimated conditional expectations.  In addition, it is close to the boundary of the doorways, resulting in the process often failing to fit through.  On the other hand, the estimates for performance iteration, although more conservative and inducing a higher cost of control, would guide the process more efficiently through doorways. Figure \ref{f:doors_comparison_map} displays control maps for both algorithms which further support the conclusions drawn above. 

This effect of estimated controls is shown on Figure \ref{f:performance}. The left panel displays Monte Carlo estimates of the true performance of estimated policies starting from $X_0=0$. The performance of the value iteration policy is significantly inferior to that obtained through the performance iteration. The right panel explains that difference by showing  the empirical distribution of the pathwise performance for the initial point $X_0 = 0$ of the two policies. The humps correspond to missing 0, 1, 2, and 3 doorways. Trajectories controlled by the performance iteration policy rarely miss more than 1 doorways and with negligible probability miss more than 2 doorways. Conversely, the process controlled by the value iteration policy has a large probability of missing 1 doorway, a significant probability of missing 2 and visible chance of missing 3 doorways. We can shed further light on the properties of the estimated controls. As value iteration humps are located leftwards from the performance iteration ones, we conclude that the performance iteration invests more in controls to avoid frequent impacts with the wall, while the value iteration saves on control but experiences more impacts with the wall, causing a much higher average value.


\begin{figure}
\centering
\includegraphics[width=1\linewidth]{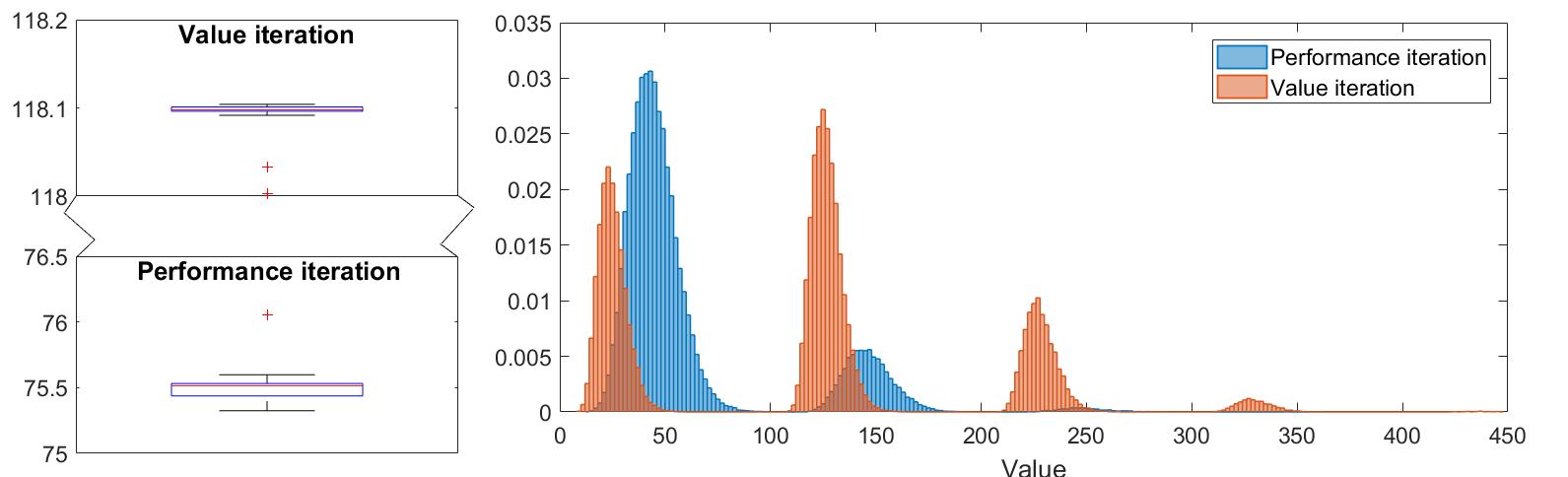}
\caption{
The left panel shows the Monte Carlo estimate of performance at time $0$ of estimated strategies when the controlled process starts from $x_0=0$. 
The right panel displays the empirical histogram of the performance of the policies estimated by the two algorithms. }
\label{f:performance}
\end{figure}

\section{Conclusions}
\label{s:conclusion}

In this paper we have presented a mathematical framework within which the description of Regression Monte Carlo algorithms is very natural. We introduced a general description of the Regress Later algorithm in both value and performance iteration specification for stochastic control problems. Exploiting our mathematical framework we derived the speed of convergence of the two schemes, and in doing so we proved that both schemes converge. We discussed some theoretical and practical consequences of our convergence theorems and finally presented numerical examples that showcase the defferences between the value and performance iteration schemes. The contribution of the paper is at least twofold: we proved the convergence of a Regression Monte Carlo scheme for control of Markov processes, which, to the best of our knowledge, has not been done before; we presented theoretical and practical explanations of the different estimation quality of the value and performance iteration.

Future work should include a study of the optimal choice of time dependent training measures and basis functions, which will allow to successfully apply Regress Later Monte Carlo to an even broader class of problems.

\bibliographystyle{abbrvnat}
\bibliography{reference}{}

\end{document}